\documentclass{article}
\usepackage{graphicx} % Required for inserting images
\usepackage{amsthm} %theorem
\usepackage{amssymb}
\usepackage{hyperref}
\newtheorem{theorem}{Theorem}
\newtheorem{lemma}{Lemma}
\newtheorem*{openproblem}{Open Problem}
\newtheorem{definition}{Definition}
\newtheorem*{conjecture}{Conjecture}
\usepackage{amsmath}
\usepackage{float}
\usepackage{subfigure}
\usepackage{tikz}
\usepackage{titlesec}

\newtheorem{corollary}{Corollary}

\usepackage{amsmath}
\usepackage{float}
\usepackage{subfigure}
\usepackage{tikz}
\usepackage{titlesec}
\titlelabel{\thetitle.\quad}
\usepackage{comment}
% \usepackage{lineno}
% \linenumbers

\titlelabel{\thetitle.\quad}
\title{Exploring the 3-Token Graph of Particular Graphs}
\author{\small{Felicia Servina Djuang, Arizka Yuliana,}\\\small{Widi Bagaskara, and Yeni Susanti}\\\small{Department of Mathematics Universitas Gadjah Mada, Yogyakarta Indonesia, 55281}}
\date{}

\begin{document}
\maketitle
\begin{abstract}
This study investigates the properties of the 3-token graph derived from path graphs, with a particular focus on its structural characteristics and key attributes. We analyze how the 3-token graph is constructed from path graphs and explore fundamental properties such as connectivity, diameter, and chromatic number. Furthermore, we extend our analysis to the 3-token graph of the disjoint union of two given graphs, examining its unique features and how the structure of the original graphs influences the resulting 3-token graph. The findings of this study contribute to a deeper understanding of token graphs and their applications in graph theory.\\
{\bf Keywords:} token graph, path, disjoin union, chromatic, independence.\\[2mm]
 {\bf 2020 Mathematics Subject Classification:} 05C75, 05C80
\end{abstract}

\section{Introduction}
    
    Token graphs were introduced by \cite{FabilaMonroy2012} et al. as a framework to model the movement of objects or entities across graph vertices following specific rules, such as in graph pebbling. These graphs have since garnered significant attention due to their ability to preserve and analyze structural properties.  

Given a graph \( G = (V(G), E(G)) \) with \( |V(G)| = n \), let \( k \leq n \) be a natural number and \( \Gamma_k(V(G)) \) denote the collection of all \( k \)-subsets of \( V(G) \). The \( k \)-token graph \( \Gamma_k(G) \) has \( \Gamma_k(V(G)) \) as its vertex set. Two distinct vertices \( A, B \in \Gamma_k(V(G)) \) are adjacent if and only if \( A \triangle B = \{x, y\} \), where \( xy \in E(G) \) (i.e., their symmetric difference is an edge in \( G \)).  

Recent studies have concentrated on the properties of token graphs. In 2023, Ju Zhang et al. \cite{Zhangju2023} explored the automorphisms in 2-token graphs, while in 2022, \cite{Zhangju2022} examined their edge-transitive properties. Earlier, in 2020, \cite{Deepalakshmi2020} investigated the structural characteristics of 2-token graphs. More recently, in 2023, \cite{Susanti2023} analyzed 2-token graphs in the context of disjoint union graphs, contributing to a deeper understanding of this evolving field.

While 2-token graphs are well-studied, 3-token graphs remain relatively unexplored, offering opportunities to address significant gaps in graph theory. This research investigates the properties of 3-token graphs derived from path graphs and disjoint unions. By extending the foundational work on 2-token graphs, this study introduces new frameworks to analyze vertex degree patterns, structural relationships, and graph invariants such as chromatic number, clique number, and independence number.  

Moreover, this research demonstrates isomorphisms between 3-token graphs and specific graph classes like the cubical staircase graph, uncovering structural relationships across graph families. These insights have practical implications for network flow analysis, resource allocation, and dynamic system modeling.  

By solving open problems and proposing new conjectures, this study advances the theoretical understanding of token graphs and lays a foundation for exploring $k$-token graphs of cycle graphs, and more complex graphs. This contribution is vital for applying graph theory to real-world challenges.  

All graphs in this paper are considerd to be finite, simple and undirected. We refer the reader to \cite{wilson} for general background on graph theory and for all undefined notions used in the text. For any graph $G$, the vertex set and the edge set will be simultaneously symbolized by $V(G)$ and $E(G)$. Recall that two graphs \( G \) and \( H \) are isomorphic if there exists a bijective function \( \phi : V(G) \to V(H) \) such that \( xy \in E(G) \) if and only if \( \phi(x)\phi(y) \in E(H) \). This function \( \phi \) is called an isomorphism, and the graphs are denoted as \( G \cong H \).   In graph theory, several fundamental parameters are used to describe a graph's structure. The degree of a vertex \(x \in V(G)\), denoted by \(\deg_G(x)\), is the number of vertices adjacent to \(x\). The distance between two vertices \(u\) and \(v\), denoted by \(d_G(u, v)\), represents the length of the shortest path connecting them. The diameter of a graph, \(\text{diam}(G)\), is defined as the longest shortest path in the graph, expressed as \(\text{diam}(G) = \max \{d_G(u, v) | u, v \in V(G)\}\). The clique number, \(\omega(G)\), is the size of the largest complete subgraph, while the independence number, \(\alpha(G)\), measures the maximum size of a set of vertices where no two are adjacent. The chromatic number, \(\chi(G)\), is the minimum number of colors needed to color the graph such that no two adjacent vertices share the same color. The dominating number, \(\sigma(G)\), is the minimum size of a vertex set \(S\) such that every vertex not in \(S\) is adjacent to at least one vertex in \(S\). The edge independence number, \(\alpha'(G)\), is the maximum size of a matching, where no two edges share a common vertex. These parameters collectively provide a comprehensive understanding of a graph’s structural properties. These parameters offer a comprehensive description of a graph’s structural properties.

\section{Results}
The degrees of vertices in \(k\)-token graphs are not yet fully understood in general, leaving this as an open area of research. In \cite{Deepalakshmi2020}, the authors introduced a vertex degree theorem specifically addressing the \(2\)-token graphs of a given graph, providing valuable insights into their structure. Expanding upon this foundational work, we aim to further explore the properties of token graphs by proposing a vertex degree theorem tailored to \(3\)-token graphs. This extension seeks to offer a deeper understanding of the degree patterns in these more complex graphs and contribute to the broader study of token graph theory.

We  provide a detailed description of the \(3\)-token graph constructed from two copies of a path graph. From this, we will gain insights into the structure of the \(3\)-token graph of the disjoint union of two arbitrary graphs.
\begin{lemma}\label{Lemma1}
    Given two path graphs $P_n^1$ and $P_n^2$, with $n \geq 3$. Let $2P_n=P_n^1 \oplus P_n^2$. It follows that
        $$\Gamma_3(2P_n)=2\Gamma_3(P_n) \oplus 2(\Gamma_2(P_n) \square P_n).$$
\end{lemma}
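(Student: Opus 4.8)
The plan is to partition the vertex set of $\Gamma_3(2P_n)$ according to how the three tokens are split between the two copies $P_n^1$ and $P_n^2$, and then to recognize each resulting piece. Writing $V(2P_n)=V(P_n^1)\sqcup V(P_n^2)$, every $3$-subset $A$ lies in exactly one of the four classes $\mathcal{T}_{3,0},\mathcal{T}_{2,1},\mathcal{T}_{1,2},\mathcal{T}_{0,3}$, where $\mathcal{T}_{i,j}$ collects those $A$ with $|A\cap V(P_n^1)|=i$ and $|A\cap V(P_n^2)|=j$. The first step is to observe that these classes are unions of connected components: if $A\triangle B=\{x,y\}$ with $xy\in E(2P_n)$, then, since $2P_n$ has no edge joining $V(P_n^1)$ to $V(P_n^2)$, the vertices $x,y$ lie in the same copy, so exchanging $x$ for $y$ leaves the token count of each copy unchanged. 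Hence $\Gamma_3(2P_n)$ is the disjoint union of the four subgraphs induced on $\mathcal{T}_{3,0},\mathcal{T}_{2,1},\mathcal{T}_{1,2},\mathcal{T}_{0,3}$.

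Next I would identify each induced subgraph. For $\mathcal{T}_{3,0}$ the vertices are exactly the $3$-subsets of $V(P_n^1)$, and two of them are adjacent in $\Gamma_3(2P_n)$ precisely when their symmetric difference is an edge of $P_n^1$; this is verbatim the definition of $\Gamma_3(P_n^1)\cong\Gamma_3(P_n)$. The class $\mathcal{T}_{0,3}$ is handled identically, and these two pieces contribute the summand $2\Gamma_3(P_n)$.

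The heart of the argument is the treatment of $\mathcal{T}_{2,1}$ (and, symmetrically, $\mathcal{T}_{1,2}$). I would write each vertex of $\mathcal{T}_{2,1}$ uniquely as $S\cup\{b\}$ with $S$ a $2$-subset of $V(P_n^1)$ and $b\in V(P_n^2)$, which yields a bijection onto $V(\Gamma_2(P_n^1))\times V(P_n^2)$. Given two such vertices $S\cup\{b\}$ and $S'\cup\{b'\}$, I would compute, using that $S,S'\subseteq V(P_n^1)$ and $b,b'\in V(P_n^2)$ are in disjoint copies, that $(S\cup\{b\})\triangle(S'\cup\{b'\})=(S\triangle S')\cup(\{b\}\triangle\{b'\})$. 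This symmetric difference is a single edge of $2P_n$ in exactly two ways: either $b=b'$ and $S\triangle S'$ is an edge of $P_n^1$ (equivalently $SS'\in E(\Gamma_2(P_n^1))$), or $S=S'$ and $bb'\in E(P_n^2)$ — which is precisely the adjacency rule of the Cartesian product $\Gamma_2(P_n^1)\,\square\,P_n^2\cong\Gamma_2(P_n)\,\square\,P_n$. Doing the same for $\mathcal{T}_{1,2}$ and assembling the four pieces gives $\Gamma_3(2P_n)=2\Gamma_3(P_n)\oplus 2(\Gamma_2(P_n)\,\square\,P_n)$.

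I expect the only care needed to be in the $\mathcal{T}_{2,1}$ step: one must check that the map $S\cup\{b\}\mapsto(S,b)$ is well defined and bijective (no collision between the two $P_n^1$-tokens and the $P_n^2$-token, since they occupy disjoint vertex sets), and that a move of a token inside $P_n^1$ never merges it with the other $P_n^1$-token precisely because that non-merging constraint is already encoded in the edge relation of $\Gamma_2(P_n)$. There is no deeper obstacle; in particular the path structure is never used, and the hypothesis $n\ge 3$ serves only to ensure that every summand is a nonempty graph.
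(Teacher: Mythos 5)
Your proposal is correct and takes essentially the same approach as the paper: the same partition of the $3$-subsets by how many tokens lie in each copy, the identification of the two pure classes with $\Gamma_3(P_n)$, and of the two mixed classes with $\Gamma_2(P_n)\,\square\,P_n$ via the map $S\cup\{b\}\mapsto(S,b)$. Your invariance observation (a token move along an edge of $2P_n$ cannot change the token count in either copy) and the factorized computation $(S\cup\{b\})\triangle(S'\cup\{b'\})=(S\triangle S')\cup(\{b\}\triangle\{b'\})$ simply streamline the paper's exhaustive case analyses, and you correctly land on the Cartesian-product adjacency, matching the statement (the paper's proof even misprints this as $\boxtimes$), so nothing is missing.
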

\begin{proof}
    According to the definition of the 3-token graph, we have
    $$V(\Gamma_3(2P_n))=\{ A \subseteq (V(P_n^1) \cup V(P_n^2)): |A|=3 \}.$$
    Let $V(P_n^1)=\{ a_i : i=1,2,\dots,n \}$ and $V(P_n^2)= \{ b_i : i=1,2,\dots,n \}$. We make a partition $\{ W_1,W_2,W_3,W_4 \}$ on $V(P_n^{1}) \cup V(P_n^2)$, where
    $$W_i=\{A \subseteq V(\Gamma_3(2P_n)): A \subseteq V(P_n^i)) \text{, for } i=1,2,$$
    $$W_3=\{A \subseteq V(\Gamma_3(2P_n)): |A \cap V(P_n^1)|=1 \text{ and } |A \cap V(P_n^2)|=2 \},$$
    $$W_4=\{A \subseteq V(\Gamma_3(2P_n)): |A \cap V(P_n^1)|=2 \text{ and } |A \cap V(P_n^2)|=1\}.$$

    It is clear that $\Gamma_3(P_n^1)$ and $\Gamma_3(P_n^2)$ are subgraphs of $\Gamma_3(2P_n)$, so that $2\Gamma_3(P_n)$ is subgraph of $\Gamma_3(2P_n)$. Now we will show that each two vertices in $\Gamma_3(P_n^1)$ or in $\Gamma_3(P_n^2)$ are not connected by any single vertex in $\Gamma_3(2P_n)$. We know that $V(\Gamma_3(P_n^1))=W_1$ and $V(\Gamma_3(P_n^2))=W_2$. Clearly, for any $A \in W_1$ and for any $B \in W_2$, it follows that $A \cap B = \emptyset$. Therefore $|A \triangle B| = 6$, implying $AB \notin E(\Gamma_3(2P_n))$. For any $A \in W_1$ and for any $C \in W_3 \cup W_4$, we consider the following cases.
    \begin{itemize}
        \item[(i)] Case $C \in W_3$. \\
        Let $C=\{a_i,b_j,b_k\}$ with $i,j,k \in \{1,2,\dots,n\}$, we obtain $|A \cap C| \leq 1 $. Therefore $|A \triangle C| \geq 4$, and thus $AC \notin E(\Gamma_3(2P_n)).$
        \item[(ii)] Case $C \in W_4$.\\ 
        Let $C=\{a_i, a_j, b_k\}$ with $i,j,k \in \{1,2,\dots,n\}$. If $\{ a_i,a_j\} \nsubseteq A$, then $AC \notin E(\Gamma_3(2P_n))$. If $\{ a_i,a_j\} \subseteq A$, then we have $A \triangle C = \{a_l,b_k\}$ with $l \in \{1,2,\dots,n\}$. As $a_lb_k \notin (E(P_n^1) \cup E(P_n^2))=E(2P_n)$, we get $AC \notin E(\Gamma_3(2P_n))$.
    \end{itemize}
    For any $B \in W_2$ and for any $C \in W_3 \cup W_4$, we get $BC \notin E(\Gamma_3(2P_n))$.

    Now, let $C=\{ a_i,b_j,b_k \} \in W_3$ and $D=\{ a_m, a_n, b_p \} \in W_4$ with $i,j,k,m,n,p \in \{1,2,\dots,n\}$.
    \begin{itemize}
        \item[(i)] If ($i=m$ or $i=n$) and ($p=j$ or $p=k$), we get $C \triangle D =\{a_t,b_u\}$ where $t \in \{m,n\}$ and $u \in \{j,k\}$. Hence $CD \notin E(\Gamma_3(2P_n)\}$.
        \item[(ii)] If ($i \neq m$ and $i \neq n$) or ($p \neq j$ and $p \neq k$), obviously $|C \triangle D| \geq 4$, so that $CD \notin E(\Gamma_3(2P_n))$.
    \end{itemize}

    Now, consider the subgraphs of $\Gamma_3(2P_n)$ induced by $W_3$ and $W_4$. Firstly, let $A,B \in W_3$ with $A \neq B$. Let $A=\{a_i,b_j,b_k\}$ and $B=\{a_m, b_n, b_p\}$ with $i,j,k,m,n,p \in \{1,2,\dots,n\}$. Clearly, $(i,j,k) \neq (m,n,p)$ and $(i,j,k) \neq (m,p,n)$. We will consider some cases:
    \begin{itemize}
        \item [1. ] If $i=m$ and $j=n$ and $k \neq p$, then $A \triangle B = \{b_k,b_p\}$. Thus, $AB \in E(\Gamma_3(2P_n))$ if and only if $\{b_j,b_k\}\{b_n,b_p\} \in E(\Gamma_2(P_n^2))$.
        \item [2. ] If $i=m$ and $j=p$ and $k \neq n$, then $A \triangle B = \{b_k,b_n\}$. Thus, $AB \in E(\Gamma_3(2P_n))$ if and only if $\{b_j,b_k\}\{b_n,b_p\} \in E(\Gamma_2(P_n^2))$.
        \item [3. ] If $i=m$ and $k=n$ and $j \neq p$, then $A \triangle B = \{b_j,b_p\}$. Thus, $AB \in E(\Gamma_3(2P_n))$ if and only if $\{b_j,b_k\}\{b_n,b_p\} \in E(\Gamma_2(P_n^2))$.
        \item [4. ] If $i=m$ and $k=p$ and $j \neq n$, then $A \triangle B = \{b_j,b_n\}$. Thus, $AB \in E(\Gamma_3(2P_n))$ if and only if $\{b_j,b_k\}\{b_n,b_p\} \in E(\Gamma_2(P_n^2))$.
        \item [5. ] If $i=m$ and $j,k \neq n$ and $j,k\neq p$, then clearly $|A \triangle B| = 4$, so $AB \notin E(\Gamma_3(2P_n))$
        \item [6. ] If $i \neq m$ and $\{j,k\}=\{n,p\}$, then $A \triangle B = \{a_i,a_m\}$. Thus, $AB \in E(\Gamma_3(2P_n))$ if and only if $a_ia_m \in E(P_n^1)$.
        \item [7. ] If $i \neq m$ and $j \in \{n,p\}$ and $k \notin \{n,p\}$, then clearly $|A \triangle B| = 4$, so that $AB \notin E(\Gamma_3(2P_n))$
        \item [8. ] If $i \neq m$ and $k \in \{n,p\}$ and $j \notin \{n,p\}$, then clearly $|A \triangle B| = 4$, so $AB \notin E(\Gamma_3(2P_n))$
        \item [9. ] If $i \neq m$ and $j,k \neq n$ and $j,k\neq p$, then clearly $|A \triangle B| = 6$, so $AB \notin E(\Gamma_3(2P_n))$.
    \end{itemize}
From these cases, for any $i,j,k,m,n,p \in \{1,2, \dots, n\}$ we have that  
$$\{a_i,b_j,b_k\}\{a_m,b_n,b_p\} \in E(\Gamma_3(2P_n))$$ if and only if either $i=m$ which implies $\{b_j,b_k\}\{b_n,b_p\} \in E(\Gamma_2(P_n^2))$ or $\{j,k\}=\{n,p\}$ which implies $a_ia_m \in E(P_n^1)$.

Now, construct a mapping $f: W_3 \to V(\Gamma_2(P_n)) \times V(P_n)$ by $f(\{a_i,b_j,b_k\})=(\{b_j,b_k\},a_i)$, for any $\{a_i,b_j,b_k\} \in W_3$. It is easy to see that $f$ is bijective. Moreover, by this correspondence we have that the subgraph induced by $W_3$ in $\Gamma_3(2P_n)$ is isomorphic to $\Gamma_2(P_n) \boxtimes P_n$.

For the case $A,B \in W_4$ with $A \neq B$, the proof is similar to the case $A,B \in W_3$ with $A \neq B$.
\end{proof}

By Lemma \ref{Lemma1}, for $n=4$, the $3$-token graph of $2P_n$ can be seen in Figure \ref{fig:3gamma2P4}.
\begin{figure}[H]
    \centering
    \resizebox{\textwidth}{!}{
    \begin{tikzpicture}[scale=1.5]
        % Nodes
        \node[circle,fill,inner sep=1.5pt,label={above:$\{2,3,4\}$}] (234) at (1,11) {};
        \node[circle,fill,inner sep=1.5pt,label={above:$\{1,3,4\}$}] (134) at (1,8) {};
        \node[circle,fill,inner sep=1.5pt,label={below:$\{1,2,4\}$}] (124) at (1,5) {};
        \node[circle,fill,inner sep=1.5pt,label={below:$\{1,2,3\}$}] (123) at (1,2) {};

        \node[circle,fill,inner sep=1.5pt,label={above:$\{1,2,5\}$}] (125) at (3,2) {};
        \node[circle,fill,inner sep=1.5pt,label={above:$\{1,2,6\}$}] (126) at (3,5) {};
        \node[circle,fill,inner sep=1.5pt,label={below:$\{1,2,7\}$}] (127) at (3,8) {};
        \node[circle,fill,inner sep=1.5pt,label={above:$\{1,2,8\}$}] (128) at (3,11) {};
        \node[circle,fill,inner sep=1.5pt,label={above:$\{1,3,5\}$}] (135) at (5,2) {};
        \node[circle,fill,inner sep=1.5pt,label={below:$\{1,3,6\}$}] (136) at (5,5) {};
        \node[circle,fill,inner sep=1.5pt,label={below:$\{1,3,7\}$}] (137) at (5,8) {};
        \node[circle,fill,inner sep=1.5pt,label={above:$\{1,3,8\}$}] (138) at (5,11) {};
        \node[circle,fill,inner sep=1.5pt,label={below:$\{1,4,5\}$}] (145) at (6,3) {};
        \node[circle,fill,inner sep=1.5pt,label={below:$\{1,4,6\}$}] (146) at (6,6) {};
        \node[circle,fill,inner sep=1.5pt,label={above:$\{1,4,7\}$}] (147) at (6,9) {};
        \node[circle,fill,inner sep=1.5pt,label={above:$\{1,4,8\}$}] (148) at (6,12) {};
        \node[circle,fill,inner sep=1.5pt,label={below:$\{2,3,5\}$}] (235) at (7,1) {};
        \node[circle,fill,inner sep=1.5pt,label={below:$\{2,3,6\}$}] (236) at (7,4) {};
        \node[circle,fill,inner sep=1.5pt,label={above:$\{2,3,7\}$}] (237) at (7,7) {};
        \node[circle,fill,inner sep=1.5pt,label={above:$\{2,3,8\}$}] (238) at (7,10) {};
        \node[circle,fill,inner sep=1.5pt,label={below:$\{2,4,5\}$}] (245) at (8,2) {};
        \node[circle,fill,inner sep=1.5pt,label={below:$\{2,4,6\}$}] (246) at (8,5) {};
        \node[circle,fill,inner sep=1.5pt,label={above:$\{2,4,7\}$}] (247) at (8,8) {};
        \node[circle,fill,inner sep=1.5pt,label={above:$\{2,4,8\}$}] (248) at (8,11) {};
        \node[circle,fill,inner sep=1.5pt,label={below:$\{3,4,5\}$}] (345) at (10,2) {};
        \node[circle,fill,inner sep=1.5pt,label={below:$\{3,4,6\}$}] (346) at (10,5) {};
        \node[circle,fill,inner sep=1.5pt,label={above:$\{3,4,7\}$}] (347) at (10,8) {};
        \node[circle,fill,inner sep=1.5pt,label={above:$\{3,4,8\}$}] (348) at (10,11) {};

        \node[circle,fill,inner sep=1.5pt,label={above:$\{1,5,6\}$}] (156) at (12,2) {};
        \node[circle,fill,inner sep=1.5pt,label={above:$\{2,5,6\}$}] (256) at (12,5) {};
        \node[circle,fill,inner sep=1.5pt,label={below:$\{3,5,6\}$}] (356) at (12,8) {};
        \node[circle,fill,inner sep=1.5pt,label={above:$\{4,5,6\}$}] (456) at (12,11) {};
        \node[circle,fill,inner sep=1.5pt,label={above:$\{1,5,7\}$}] (157) at (14,2) {};
        \node[circle,fill,inner sep=1.5pt,label={below:$\{2,5,7\}$}] (257) at (14,5) {};
        \node[circle,fill,inner sep=1.5pt,label={below:$\{3,5,7\}$}] (357) at (14,8) {};
        \node[circle,fill,inner sep=1.5pt,label={above:$\{4,5,7\}$}] (457) at (14,11) {};
        \node[circle,fill,inner sep=1.5pt,label={below:$\{1,5,8\}$}] (158) at (15,3) {};
        \node[circle,fill,inner sep=1.5pt,label={below:$\{2,5,8\}$}] (258) at (15,6) {};
        \node[circle,fill,inner sep=1.5pt,label={above:$\{3,5,8\}$}] (358) at (15,9) {};
        \node[circle,fill,inner sep=1.5pt,label={above:$\{4,5,8\}$}] (458) at (15,12) {};
        \node[circle,fill,inner sep=1.5pt,label={below:$\{1,6,7\}$}] (167) at (16,1) {};
        \node[circle,fill,inner sep=1.5pt,label={below:$\{2,6,7\}$}] (267) at (16,4) {};
        \node[circle,fill,inner sep=1.5pt,label={above:$\{3,6,7\}$}] (367) at (16,7) {};
        \node[circle,fill,inner sep=1.5pt,label={above:$\{4,6,7\}$}] (467) at (16,10) {};
        \node[circle,fill,inner sep=1.5pt,label={below:$\{1,6,8\}$}] (168) at (17,2) {};
        \node[circle,fill,inner sep=1.5pt,label={below:$\{2,6,8\}$}] (268) at (17,5) {};
        \node[circle,fill,inner sep=1.5pt,label={above:$\{3,6,8\}$}] (368) at (17,8) {};
        \node[circle,fill,inner sep=1.5pt,label={above:$\{4,6,8\}$}] (468) at (17,11) {};
        \node[circle,fill,inner sep=1.5pt,label={below:$\{1,7,8\}$}] (178) at (19,2) {};
        \node[circle,fill,inner sep=1.5pt,label={below:$\{2,7,8\}$}] (278) at (19,5) {};
        \node[circle,fill,inner sep=1.5pt,label={above:$\{3,7,8\}$}] (378) at (19,8) {};
        \node[circle,fill,inner sep=1.5pt,label={above:$\{4,7,8\}$}] (478) at (19,11) {};

        \node[circle,fill,inner sep=1.5pt,label={above:$\{6,7,8\}$}] (678) at (21,11) {};
        \node[circle,fill,inner sep=1.5pt,label={above:$\{5,7,8\}$}] (578) at (21,8) {};
        \node[circle,fill,inner sep=1.5pt,label={below:$\{5,6,8\}$}] (568) at (21,5) {};
        \node[circle,fill,inner sep=1.5pt,label={below:$\{5,6,7\}$}] (567) at (21,2) {};
       
        % Edges
        \draw (123) -- (124);
        \draw (124) -- (134);
        \draw (134) -- (234);

        \draw (125) -- (126);
        \draw (126) -- (127);
        \draw (127) -- (128);
        \draw (125) -- (135);
        \draw (126) -- (136);
        \draw (127) -- (137);
        \draw (128) -- (138);
        \draw (135) -- (136);
        \draw (136) -- (137);
        \draw (137) -- (138);
        \draw (135) -- (145);
        \draw (135) -- (235);
        \draw (136) -- (146);
        \draw (136) -- (236);
        \draw (137) -- (147);
        \draw (137) -- (237);
        \draw (138) -- (148);
        \draw (138) -- (238);
        \draw (145) -- (146);
        \draw (146) -- (147);
        \draw (147) -- (148);
        \draw (235) -- (236);
        \draw (236) -- (237);
        \draw (237) -- (238);
        \draw (245) -- (235);
        \draw (245) -- (145);
        \draw (246) -- (236);
        \draw (246) -- (146);
        \draw (247) -- (237);
        \draw (247) -- (147);
        \draw (248) -- (238);
        \draw (248) -- (148);
        \draw (245) -- (246);
        \draw (246) -- (247);
        \draw (247) -- (248);
        \draw (245) -- (345);
        \draw (246) -- (346);
        \draw (247) -- (347);
        \draw (248) -- (348);
        \draw (345) -- (346);
        \draw (346) -- (347);
        \draw (347) -- (348);

        \draw (156) -- (256);
        \draw (256) -- (356);
        \draw (356) -- (456);
        \draw (157) -- (257);
        \draw (257) -- (357);
        \draw (357) -- (457);
        \draw (157) -- (158);
        \draw (157) -- (167);
        \draw (257) -- (258);
        \draw (257) -- (267);
        \draw (357) -- (358);
        \draw (357) -- (367);
        \draw (457) -- (458);
        \draw (457) -- (467);
        \draw (158) -- (258);
        \draw (258) -- (358);
        \draw (358) -- (458);
        \draw (167) -- (267);
        \draw (267) -- (367);
        \draw (367) -- (467);
        \draw (168) -- (158);
        \draw (168) -- (167);
        \draw (268) -- (258);
        \draw (268) -- (267);
        \draw (368) -- (358);
        \draw (368) -- (367);
        \draw (468) -- (458);
        \draw (468) -- (467);
        \draw (168) -- (268);
        \draw (268) -- (368);
        \draw (368) -- (468);
        \draw (168) -- (178);
        \draw (268) -- (278);
        \draw (368) -- (378);
        \draw (468) -- (478);
        \draw (178) -- (278);
        \draw (278) -- (378);
        \draw (378) -- (478);
        \draw (156) -- (157);
        \draw (256) -- (257);
        \draw (356) -- (357);
        \draw (456) -- (457);

        \draw (567) -- (568);
        \draw (568) -- (578);
        \draw (578) -- (678);
        
    \end{tikzpicture}}\caption{The Graph $\Gamma_3(2P_4)$}
    \label{fig:3gamma2P4}
    \end{figure}
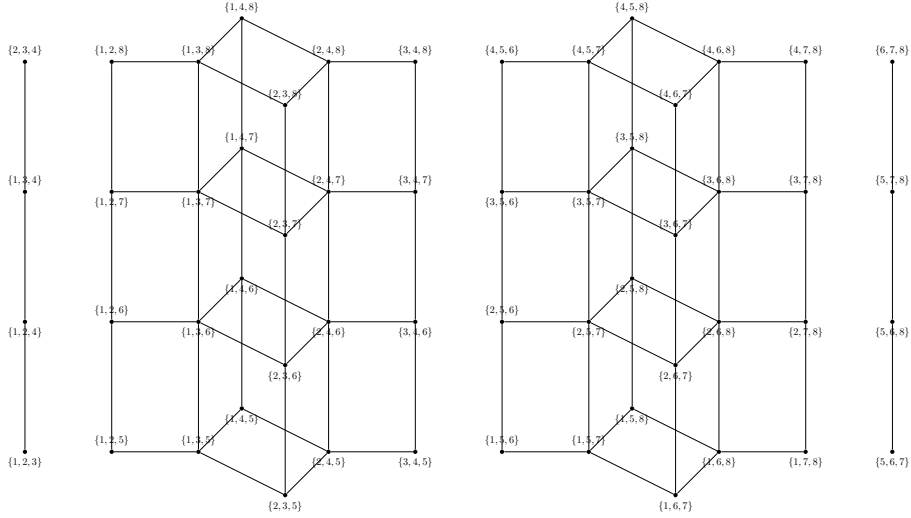

By examining Lemma \ref{Lemma1}, which establishes that  
\[
\Gamma_3(P_n \oplus P_n) = \Gamma_3(P_n) \oplus \Gamma_3(P_n) \oplus (\Gamma_2(P_n) \square P_n) \oplus (\Gamma_2(P_n) \square P_n),
\]  
we derive the following theorem, which provides a general characterization of the structure of the 3-token graph for any graph consisting of two components.\begin{theorem}\label{3token-disjoint}
        For any two graphs $G$ and $H$, it follows that        $$\Gamma_3(G \oplus H)=\Gamma_3(G) \oplus \Gamma_3(H) \oplus (\Gamma_2(G) \square H) \oplus (\Gamma_2(H) \square G).$$
    \end{theorem}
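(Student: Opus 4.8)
The plan is to partition the vertex set of $\Gamma_3(G \oplus H)$ according to how a $3$-subset distributes its elements between $V(G)$ and $V(H)$, mirroring the argument of Lemma~\ref{Lemma1} but now in full generality. Write $V = V(G) \cup V(H)$ (disjoint union) and set
$$W_1 = \{A : |A| = 3,\ A \subseteq V(G)\},\quad W_2 = \{A : |A| = 3,\ A \subseteq V(H)\},$$
$$W_3 = \{A : |A \cap V(G)| = 1,\ |A \cap V(H)| = 2\},\quad W_4 = \{A : |A \cap V(G)| = 2,\ |A \cap V(H)| = 1\}.$$
These four sets partition $V(\Gamma_3(G \oplus H))$. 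The induced subgraphs on $W_1$ and $W_2$ are $\Gamma_3(G)$ and $\Gamma_3(H)$ respectively, since an edge of $G \oplus H$ lying inside $V(G)$ is an edge of $G$, and similarly for $H$.

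Next I would show there are no edges between distinct parts $W_i, W_j$. The key observation is that every edge $xy \in E(G \oplus H)$ has both endpoints in $V(G)$ or both in $V(H)$; consequently, if $AB \in E(\Gamma_3(G \oplus H))$ with $A \triangle B = \{x, y\}$, then adding/removing the pair $\{x,y\}$ cannot change the count $|A \cap V(G)|$ — it either leaves it fixed (if $x, y \in V(H)$) or changes it by replacing one $V(G)$-element with another (if $x, y \in V(G)$, forcing $|A \cap V(G)| = |B \cap V(G)|$ as well). Hence $|A \cap V(G)| = |B \cap V(G)|$ for any edge, so adjacent vertices lie in the same $W_i$. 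This is cleaner than the case analysis in Lemma~\ref{Lemma1} because we never need to look at specific indices.

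It remains to identify the induced subgraph on $W_3$ (the $W_4$ case being symmetric, swapping $G$ and $H$). Define $f : W_3 \to V(\Gamma_2(H)) \times V(G)$ by $f(A) = (A \cap V(H),\ a)$ where $\{a\} = A \cap V(G)$; this is a bijection. For $A, B \in W_3$ with $f(A) = (S, a)$ and $f(B) = (T, b)$, we have $A \triangle B = (S \triangle T) \cup (\{a\} \triangle \{b\})$, a disjoint union with $S \triangle T \subseteq V(H)$ and $\{a\} \triangle \{b\} \subseteq V(G)$. So $|A \triangle B| = 2$ with the pair being an edge of $G \oplus H$ iff exactly one of the following holds: $a = b$ and $S \triangle T$ is an edge of $H$ (i.e., $ST \in E(\Gamma_2(H))$); or $S = T$ and $ab \in E(G)$. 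These are precisely the adjacency conditions in the Cartesian product $\Gamma_2(H) \square G$, so the subgraph induced on $W_3$ is isomorphic to $\Gamma_2(H) \square G$. Combining the four parts with no cross-edges gives the claimed disjoint-union decomposition.

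\textbf{Main obstacle.} The only real subtlety is getting the product factor orientation right and not conflating $\Gamma_2(G) \square H$ with $\Gamma_2(H) \square G$: one must be careful that $W_3$ (one token in $G$, two in $H$) yields $\Gamma_2(H) \square G$, while $W_4$ yields $\Gamma_2(G) \square H$. Everything else reduces to the elementary fact that edges of a disjoint union never cross between components, which is what makes the cross-edge analysis collapse to a one-line parity argument rather than the nine-case split used for the concrete path case.
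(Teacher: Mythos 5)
Your proposal is correct and follows essentially the same route as the paper's proof: the identical four-part partition $W_1,\dots,W_4$ of $V(\Gamma_3(G \oplus H))$, the identification of $W_1,W_2$ with $\Gamma_3(G),\Gamma_3(H)$, and the same bijection sending $A \in W_3$ to $\bigl(A \cap V(H),\, A \cap V(G)\bigr)$ to recognize the induced subgraphs as Cartesian products. The only differences are simplifications in execution rather than method: your observation that a single token move along an edge of $G \oplus H$ preserves $|A \cap V(G)|$ kills all cross-part edges in one stroke, and your symmetric-difference decomposition $A \triangle B = (S \triangle T) \cup (\{a\} \triangle \{b\})$ replaces the paper's nine-case index analysis, both of which are valid and cleaner.
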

\begin{proof}
    By definition of the 3-token graph of $G$, we have
    $$V(\Gamma_3(G \oplus H))=\{ A \subseteq (V(G) \cup V(H)): |A|=3 \}.$$
    Let $V(G)=\{ a_i : i=1,2,\dots,n_1 \}$ and $V(H)= \{ b_i : i=1,2,\dots,n_2 \}$. We again form a partition $\{ W_1,W_2,W_3,W_4 \}$ on $V(\Gamma_3(G \oplus H))$, where
    $$W_1=\{A \subseteq V(\Gamma_3(G \oplus H)): A \subseteq V(G)\},$$
    $$W_2=\{A \subseteq V(\Gamma_3(G \oplus H)): A \subseteq V(H)\},$$
    $$W_3=\{A \subseteq V(\Gamma_3(G \oplus H)): |A \cap V(G)|=1 \text{ and } |A \cap V(H)|=2 \},$$
    $$W_4=\{A \subseteq V(\Gamma_3(G \oplus H)): |A \cap V(H)|=2 \text{ and } |A \cap V(H)|=1\}.$$

    It is clear that $\Gamma_3(G)$ and $\Gamma_3(H)$ are subgraphs of $\Gamma_3(G \oplus H)$, so that $\Gamma_3(G) \oplus \Gamma_3(H)$ is a subgraph of $\Gamma_3(G \oplus H)$. { Now we will show that each vertex in $\Gamma_3(G)$ or in $\Gamma_3(H)$ is not connected by any single vertex in $\Gamma_3(G \oplus H)$.} We know that $V(\Gamma_3(G))=W_1$ and $V(\Gamma_3(H)=W_2$. Clearly, for any $A \in W_1$ and for any $B \in W_2$, it follows that $A \cap B = \emptyset$, so that $|A \triangle B| = 6$. As a consequence, $AB \notin E(\Gamma_3(G \oplus H))$. Secondly, for any $A \in W_1$ and for any $C \in W_3 \cup W_4$, we consider the following cases:
    \begin{itemize}
        \item[(i)] Case: $C \in W_3$.\\
        Let $C=\{a_i,b_j,b_k\}$ with $i \in \{1,2,\dots,n_1\}$, $j,k \in \{1,2,\dots,n_2\}$ we got $|A \cap C| \leq 1 \}$, so $|A \triangle C| \geq 4$, clear that $AC \notin E(\Gamma_3(2P_n)).$
        \item[(ii)] Case: $C \in W_4$. \\Let $C=\{a_i, a_j, b_k\}$ with $i,j \in \{1,2,\dots,n_1\}$, $k \in \{1,2,\dots,n_2\}$. If $\{ a_i,a_j\} \nsubseteq A$, clear that $AC \notin E(\Gamma_3(G \oplus H))$. If $\{ a_i,a_j\} \subseteq A$, we have $A \triangle C = \{a_l,b_k\}$ with $l \in \{1,2,\dots,n_1\}$. Since $a_lb_k \notin (E(G) \cup E(H))=E(G \oplus H\}$, we get $AC \notin E(\Gamma_3(G \oplus H))$.
    \end{itemize}
    For any $B \in W_2$ and for any $C \in W_3 \cup W_4$,  $BC \notin E(\Gamma_3(G \oplus H))$.

    Now, let $C=\{ a_i,b_j,b_k \} \in W_3$ and $D=\{ a_m, a_n, b_p \} \in W_4$ where $i,m,n \in \{1,2,\dots,n_1\}$, $j,k,p \in \{1,2,\dots,n_2\}$.
    \begin{itemize}
        \item[(i)] If ($i=m$ or $i=n$) and ($p=j$ or $p=k$), we obtain $C \triangle D =\{a_t,b_u\}$ where $t \in \{m,n\}$ and $u \in \{j,k\}$. Hence, $CD \notin E(\Gamma_3(G \oplus H))$.
        \item[(ii)] If ($i \neq m$ and $i \neq n$) or ($p \neq j$ and $p \neq k$), obviously $|C \triangle D| \geq 4$, so that $CD \notin E(\Gamma_3(G \oplus H))$.
    \end{itemize}

    Now, consider the subgraphs of $\Gamma_3(G \oplus H)$ induced by $W_3$ and $W_4$. Let $A=\{a_i,b_j,b_k\} \in W_3$ and $B=\{a_m, b_n, b_p\} \in W_4$ with $A \neq B$ where $i,m \in \{1,2,\dots,n_1\}$, and $j,k,n,p \in \{1,2,\dots,n_2\}$. Clearly, $(i,j,k) \neq (m,n,p)$ and $(i,j,k) \neq (m,p,n)$. We will consider cases:
    \begin{itemize}
        \item [1. ] If $i=m$ and $j=n$ and $k \neq p$, then $A \triangle B = \{b_k,b_p\}$. Thus, $AB \in E(\Gamma_3(G \oplus H))$ if and only if $\{b_j,b_k\}\{b_n,b_p\} \in E(\Gamma_2(H))$.
        \item [2. ] If $i=m$ and $j=p$ and $k \neq n$, then $A \triangle B = \{b_k,b_n\}$. Thus, $AB \in E(\Gamma_3(G \oplus H))$ if and only if $\{b_j,b_k\}\{b_n,b_p\} \in E(\Gamma_2(H))$.
        \item [3. ] If $i=m$ and $k=n$ and $j \neq p$, then $A \triangle B = \{b_j,b_p\}$. Thus, $AB \in E(\Gamma_3(G \oplus H))$ if and only if $\{b_j,b_k\}\{b_n,b_p\} \in E(\Gamma_2(H))$.
        \item [4. ] If $i=m$ and $k=p$ and $j \neq n$, then $A \triangle B = \{b_j,b_n\}$. Thus, $AB \in E(\Gamma_3(G \oplus H))$ if and only if $\{b_j,b_k\}\{b_n,b_p\} \in E(\Gamma_2(H))$.
        \item [5. ] If $i=m$ and $j,k \neq n$ and $j,k\neq p$, then clearly $|A \triangle B| = 4$, so $AB \notin E(\Gamma_3(G \oplus H))$.
        \item [6. ] If $i \neq m$ and $\{j,k\}=\{n,p\}$, then $A \triangle B = \{a_i,a_m\}$. Thus, $AB \in E(\Gamma_3(G \oplus H))$ if and only if $a_ia_m \in E(G)$.
        \item [7. ] If $i \neq m$ and $j \in \{n,p\}$ and $k \notin \{n,p\}$, then clearly $|A \triangle B| = 4$, so that $AB \notin E(\Gamma_3(G \oplus H))$.
        \item [8. ] If $i \neq m$ and $k \in \{n,p\}$ and $j \notin \{n,p\}$, then clearly $|A \triangle B| = 4$, so that $AB \notin E(\Gamma_3(G \oplus H))$.
        \item [9. ] If $i \neq m$ and $j,k \neq n$ and $j,k\neq p$, then clearly $|A \triangle B| = 6$, so that $AB \notin E(\Gamma_3(G \oplus H))$.
    \end{itemize}
From these cases, for any $i,m \in \{1,2,\dots,n_1\}$, $j,k,n,p \in \{1,2,\dots,n_2\}$, we will have either $i=m$ implies $\{b_j,b_k\}\{b_n,b_p\} \in E(\Gamma_2(H))$ or $\{j,k\}=\{n,p\}$ implies $a_ia_m \in E(G)$.

Now, we construct a mapping $f: W_3 \to V(G) \times V(\Gamma_2(H))$ by $f(\{a_i,b_j,b_k\})=(\{b_j,b_k\},a_i)$, for any $\{a_i,b_j,b_k\} \in W_3$. It is easy to see that $f$ is bijective. Moreover, by this correspondence we have that the subgraph induced by $W_3$ in $\Gamma_3(G \oplus H)$ is isomorphic to $G \boxtimes \Gamma_2(H)$.

For the case $A,B \in W_4$ with $A \neq B$, the proof is similar to the case $A,B \in W_3$ with $A \neq B$.
\end{proof}

The following figure provides a visual representation of the 3-token graph of \(P_4 \oplus C_3\).
\begin{figure}[H]
    \centering
    \resizebox{\textwidth}{!}{
    \begin{tikzpicture}[scale=0.7]
        % Nodes
        \node[circle,fill,inner sep=1.5pt,label={[scale=.7]left:$\{4,5,6\}$}] (456) at (1,1) {};
        \node[circle,fill,inner sep=1.5pt,label={[scale=.7]left:$\{4,5,7\}$}] (457) at (1,4) {};
        \node[circle,fill,inner sep=1.5pt,label={[scale=.7]left:$\{4,6,7\}$}] (467) at (1,9) {};
        \node[circle,fill,inner sep=1.5pt,label={[scale=.7]left:$\{5,6,7\}$}] (567) at (1,12) {};

        \node[circle,fill,inner sep=1.5pt,label={[scale=.7]below:$\{2,4,5\}$}] (245) at (4,1) {};
        \node[circle,fill,inner sep=1.5pt,label={[scale=.7]above:$\{3,4,5\}$}] (345) at (5,2) {};
        \node[circle,fill,inner sep=1.5pt,label={[scale=.7]below:$\{1,4,5\}$}] (145) at (6,1) {};
        \node[circle,fill,inner sep=1.5pt,label={[scale=.7]left:$\{2,4,6\}$}] (246) at (4,3) {};
        \node[circle,fill,inner sep=1.5pt,label={[scale=.7]below:$\{3,4,6\}$}] (346) at (5,4) {};
        \node[circle,fill,inner sep=1.5pt,label={[scale=.7]right:$\{1,4,6\}$}] (146) at (6,3) {};
        \node[circle,fill,inner sep=1.5pt,label={[scale=.7]below:$\{2,5,6\}$}] (256) at (2,6) {};
        \node[circle,fill,inner sep=1.5pt,label={[scale=.7]above:$\{1,5,6\}$}] (156) at (4,6) {};
        \node[circle,fill,inner sep=1.5pt,label={[scale=.7]above:$\{3,5,6\}$}] (356) at (3,7) {};
        \node[circle,fill,inner sep=1.5pt,label={[scale=.7]below:$\{2,4,7\}$}] (247) at (6,6) {};
        \node[circle,fill,inner sep=1.5pt,label={[scale=.7]above:$\{3,4,7\}$}] (347) at (7,7) {};
        \node[circle,fill,inner sep=1.5pt,label={[scale=.7]below:$\{1,4,7\}$}] (147) at (8,6) {};
        \node[circle,fill,inner sep=1.5pt,label={[scale=.7]below:$\{2,5,7\}$}] (257) at (4,9) {};
        \node[circle,fill,inner sep=1.5pt,label={[scale=.7]below:$\{1,5,7\}$}] (157) at (6,9) {};
        \node[circle,fill,inner sep=1.5pt,label={[scale=.7]above:$\{3,5,7\}$}] (357) at (5,10) {};
        \node[circle,fill,inner sep=1.5pt,label={[scale=.7]above:$\{2,6,7\}$}] (267) at (4,11) {};
        \node[circle,fill,inner sep=1.5pt,label={[scale=.7]above:$\{3,6,7\}$}] (367) at (5,12) {};
        \node[circle,fill,inner sep=1.5pt,label={[scale=.7]above:$\{1,6,7\}$}] (167) at (6,11) {};
        
        \node[circle,fill,inner sep=1.5pt,label={[scale=.7]left:$\{1,2,4\}$}] (124) at (9,1) {};
        \node[circle,fill,inner sep=1.5pt,label={[scale=.7]right:$\{1,3,4\}$}] (134) at (11,1) {};
        \node[circle,fill,inner sep=1.5pt,label={[scale=.7]below:$\{2,3,4\}$}] (234) at (10,2) {};
        \node[circle,fill,inner sep=1.5pt,label={[scale=.7]left:$\{1,2,5\}$}] (125) at (9,4) {};
        \node[circle,fill,inner sep=1.5pt,label={[scale=.7]right:$\{1,3,5\}$}] (135) at (11,4) {};
        \node[circle,fill,inner sep=1.5pt,label={[scale=.7]below:$\{2,3,5\}$}] (235) at (10,5) {};
        \node[circle,fill,inner sep=1.5pt,label={[scale=.7]above:$\{2,3,7\}$}] (237) at (10,12) {};
        \node[circle,fill,inner sep=1.5pt,label={[scale=.7]left:$\{1,2,7\}$}] (127) at (9,11) {};
        \node[circle,fill,inner sep=1.5pt,label={[scale=.7]right:$\{1,3,7\}$}] (137) at (11,11) {};
        \node[circle,fill,inner sep=1.5pt,label={[scale=.7]below:$\{2,3,6\}$}] (236) at (10,9) {};
        \node[circle,fill,inner sep=1.5pt,label={[scale=.7]left:$\{1,2,6\}$}] (126) at (9,8) {};
        \node[circle,fill,inner sep=1.5pt,label={[scale=.7]right:$\{1,3,6\}$}] (136) at (11,8) {};
        
        \node[circle,fill,inner sep=1.5pt,label={[scale=.7]above:$\{1,2,3\}$}] (123) at (14,6) {};
    
        % Edges
        \draw (456) -- (457);
        \draw (457) -- (467);
        \draw (467) -- (567);

        \draw (245) -- (145);
        \draw (245) -- (345);
        \draw (145) -- (345);
        \draw (245) -- (246);
        \draw (145) -- (146);
        \draw (345) -- (346);
        \draw (246) -- (256);
        \draw (346) -- (356);
        \draw (146) -- (156);
        \draw (246) -- (247);
        \draw (346) -- (347);
        \draw (146) -- (147);
        \draw (256) -- (257);
        \draw (356) -- (357);
        \draw (156) -- (157);
        \draw (247) -- (257);
        \draw (347) -- (357);
        \draw (147) -- (157);
        \draw (256) -- (156);
        \draw (246) -- (146);
        \draw (247) -- (147);
        \draw (256) -- (356);
        \draw (356) -- (156);
        \draw (246) -- (346);
        \draw (346) -- (146);
        \draw (247) -- (347);
        \draw (347) -- (147);
        \draw (257) -- (357);
        \draw (257) -- (157);
        \draw (357) -- (157);
        \draw (257) -- (267);
        \draw (367) -- (357);
        \draw (167) -- (157);
        \draw (267) -- (367);
        \draw (367) -- (167);
        \draw (267) -- (167);

        \draw (124) -- (234);
        \draw (124) -- (134);
        \draw (234) -- (134);
        \draw (125) -- (135);
        \draw (125) -- (235);
        \draw (235) -- (135);
        \draw (126) -- (136);
        \draw (126) -- (236);
        \draw (136) -- (236);
        \draw (127) -- (237);
        \draw (127) -- (137);
        \draw (137) -- (237);
        \draw (124) -- (125);
        \draw (134) -- (135);
        \draw (234) -- (235);
        \draw (125) -- (126);
        \draw (235) -- (236);
        \draw (135) -- (136);
        \draw (126) -- (127);
        \draw (236) -- (237);
        \draw (136) -- (137);                
    \end{tikzpicture}
    
    }
    \caption{The Graph $\Gamma_3(P_4 \oplus C_3)$}
    \label{fig:3gammaP4C3}
\end{figure}
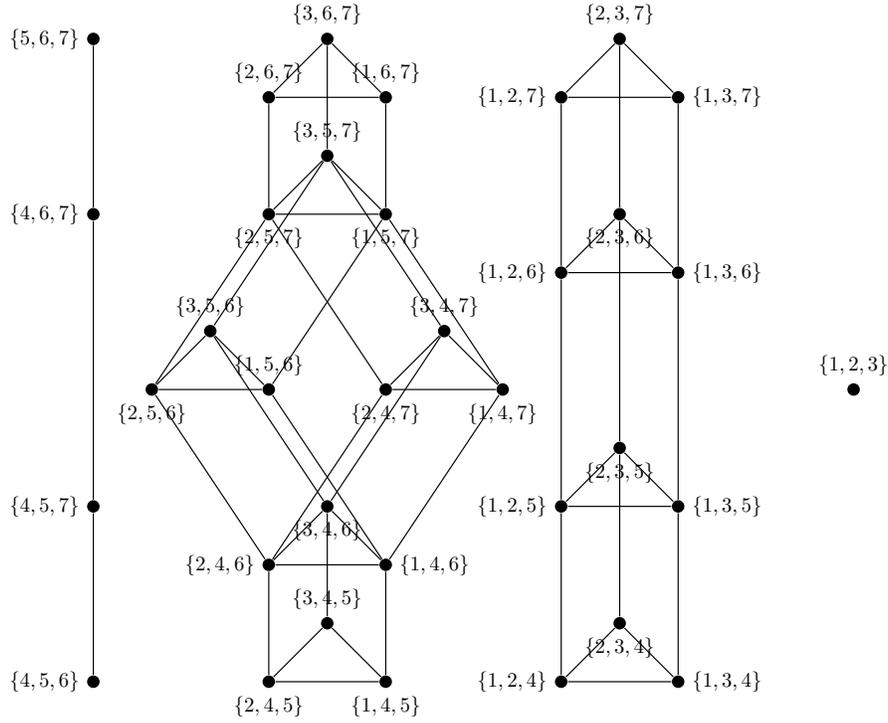

%\begin{definition} [Djuang]
 %   The \textbf{cubical staircase} graph \(CS_n\) is a graph that is isomorphic to the graph \(G = (V(G), E(G))\) with
%    \begin{align*}
 %       V(G)= &\{(i,j,k): 1\leq i < j < k \leq n\}\\
%        \text{and}\\
%        E(G)= &\{(i,j,k)(i,j,k+1): 1\leq i < j < k <k+1 \leq n\} \\ &\cup \{(i,j,k)(i+1,j,k): 1 \leq i<i+1<j<k \leq n\} \\ &\cup \{(i,j,k)(i,j+1,k): 1 \leq i<j<j+1<k\leq n\}.    
%    \end{align*}
%\end{definition}

From Theorem \ref{3token-disjoint}, which describes the \(3\)-token graph of the disjoint union of two graphs, along with Theorem 2.1 as given in \cite{Susanti2023} describing the structure of the $2$-token graph of disconnected graphs, we can iteratively determine the \(3\)-token graph of a disconnected graph consisting of multiple components. By applying these results step by step, we can construct the \(3\)-token graph for more complex disconnected graphs with several individual components.
\begin{theorem}\label{Th-n-components}
Let $n\geq 2$ be arbitrary natural number and let $G_1,G_2,\ldots, G_n$ be arbitrary graphs. Then $$\Gamma_3(\bigoplus_{1}^{n} G_i)\cong \bigoplus_{i=1}^n\Gamma_3(G_i)\oplus \bigoplus_{1\leq i,j\leq n,i\neq j}(\Gamma_2(G_i)\square G_j)\oplus \bigoplus_{1\leq i<j<k\leq n}(G_i\square G_j\square G_k).$$
\end{theorem}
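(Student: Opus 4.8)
The plan is to prove the statement by induction on the number of components $n$, using Theorem \ref{3token-disjoint} as the inductive engine. For the base case $n=2$ the claimed formula specializes exactly to Theorem \ref{3token-disjoint}: the double sum $\bigoplus_{1\le i,j\le 2,\,i\ne j}(\Gamma_2(G_i)\square G_j)$ is just $(\Gamma_2(G_1)\square G_2)\oplus(\Gamma_2(G_2)\square G_1)$, and the triple sum is empty. So I would assume the formula holds for every family of $n-1$ graphs and write $\bigoplus_1^{n}G_i=\big(\bigoplus_1^{n-1}G_i\big)\oplus G_n$.

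Applying Theorem \ref{3token-disjoint} with $G=\bigoplus_1^{n-1}G_i$ and $H=G_n$ gives
$$\Gamma_3\Big(\bigoplus_1^{n}G_i\Big)\cong \Gamma_3\Big(\bigoplus_1^{n-1}G_i\Big)\oplus\Gamma_3(G_n)\oplus\Big(\Gamma_2\big(\bigoplus_1^{n-1}G_i\big)\square G_n\Big)\oplus\Big(\Gamma_2(G_n)\square\bigoplus_1^{n-1}G_i\Big).$$
Into this I would substitute three ingredients: (a) the inductive hypothesis for $\Gamma_3\big(\bigoplus_1^{n-1}G_i\big)$; (b) the structure of the $2$-token graph of a disjoint union from \cite{Susanti2023}, namely $\Gamma_2\big(\bigoplus_1^{m}G_i\big)\cong\bigoplus_{i=1}^m\Gamma_2(G_i)\oplus\bigoplus_{1\le i<j\le m}(G_i\square G_j)$, used with $m=n-1$; and (c) the distributive law $(A\oplus B)\square C\cong(A\square C)\oplus(B\square C)$, together with associativity and commutativity up to isomorphism of $\square$ and $\oplus$, to push the Cartesian products with $G_n$ and with $\Gamma_2(G_n)$ through the disjoint unions.

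After substitution the right-hand side is a disjoint union whose summands I would regroup as follows. The pieces $\Gamma_3(G_i)$ for $i\le n-1$ together with $\Gamma_3(G_n)$ give $\bigoplus_{i=1}^n\Gamma_3(G_i)$. The pieces $\Gamma_2(G_i)\square G_j$ for $i,j\le n-1$ with $i\ne j$, the pieces $\Gamma_2(G_i)\square G_n$ (coming from ingredient (b) inside $\Gamma_2\big(\bigoplus_1^{n-1}G_i\big)\square G_n$), and the pieces $\Gamma_2(G_n)\square G_i$ for $i\le n-1$, together exhaust exactly the ordered pairs $(i,j)$ with $1\le i,j\le n$ and $i\ne j$. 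Finally the triple products $G_i\square G_j\square G_k$ for $i<j<k\le n-1$ together with the triple products $G_i\square G_j\square G_n$ for $i<j\le n-1$ (the latter being the other half of ingredient (b) after distributing $\square G_n$) exhaust exactly the triples $1\le i<j<k\le n$. Collecting these three groups yields the stated formula, completing the induction.

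The routine but delicate part, and the only real obstacle, is the bookkeeping in the last paragraph: one must check that $\Gamma_2\big(\bigoplus_1^{n-1}G_i\big)\square G_n$ genuinely contributes both the ``$\Gamma_2(G_i)\square G_n$'' family and the ``$G_i\square G_j\square G_n$'' family, and that enlarging each index range from ``$\le n-1$'' to ``$\le n$'' adds every missing term exactly once and no term twice. An alternative that sidesteps the induction is a direct argument in the style of Lemma \ref{Lemma1} and Theorem \ref{3token-disjoint}: partition the $3$-subsets of $\bigcup_i V(G_i)$ according to the multiset of components they meet (all three in one $G_i$; two in $G_i$ and one in $G_j$; one in each of three distinct $G_i,G_j,G_k$), verify that no edge of $\Gamma_3(\bigoplus_i G_i)$ joins two different blocks of this partition, and identify each block's induced subgraph with the corresponding $\Gamma_3(G_i)$, $\Gamma_2(G_i)\square G_j$, or $G_i\square G_j\square G_k$; however, the inductive route is considerably shorter given what is already available.
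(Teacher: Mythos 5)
Your proposal is correct and follows essentially the same route as the paper: induction on the number of components, with Theorem \ref{3token-disjoint} giving the base case and the inductive step, the $2$-token decomposition of a disjoint union from \cite{Susanti2023}, and distributivity of $\square$ over $\oplus$ to regroup the summands into the three stated families. The only difference is cosmetic (you step from $n-1$ to $n$ rather than $n$ to $n+1$), and your bookkeeping of the ordered pairs and triples matches what the paper's chain of isomorphisms does implicitly.
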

\begin{proof}
    We will prove by mathematical induction on the number of component $n$. By Theorem \ref{3token-disjoint}, the assertion is true for $n=2$. Assume that the assertion is true for arbitrary $n$. We proceed for $n+1$. By assumption and by Theorem 2.1 on the $2$-token graph of disconnected graphs given in \cite{Susanti2023}, we have
    $$\Gamma_3(\bigoplus_{1}^{n+1} G_i)\cong\Gamma_3(\bigoplus_{1}^n G_i \oplus G_{n+1})\cong\Gamma_3(\bigoplus_{i=1}^n G_i)\oplus \Gamma_3(G_{n+1})\oplus (\Gamma_2(\bigoplus_{i=1}^n G_i)\square G_{n+1})\oplus$$
    $$\bigoplus_{i=1}^n (G_i\square \Gamma_2(G_{n+1}))\cong\bigoplus_{i=1}^n \Gamma_3(G_i)\oplus \bigoplus_{1\leq i,j\leq n, i\neq j} (\Gamma_2(G_i)\square G_j) \oplus $$$$\bigoplus_{1\leq i<j<k\leq n} (G_i\square G_j\square G_k)\oplus \Gamma_3(G_{n+1})\oplus ((\bigoplus_{i=1}^n\Gamma_2(G_i)\oplus\bigoplus_{1\leq i<j\leq n} (G_i\square G_j))\square G_{n+1})$$ $$\oplus \bigoplus_{i=1}^n(G_i\square \Gamma_2(G_{n+1}))\cong\bigoplus_{i=1}^{n+1}\Gamma_3(G_i)\oplus \bigoplus_{1\leq i,j\leq n+1, i\neq j}(\Gamma_2(G_i)\square G_j)\oplus $$$$\bigoplus_{1\leq i<j<k\leq n+1}(G_i\square G_j\square G_k).$$
    
\end{proof}

As a direct consequence of Theorem \ref{Th-n-components}, we derive the following result concerning the number of components in the $3$-token graph of the disjoint union of graphs.

\begin{corollary}
    Let $n\geq 2$ be arbitrary natural number and let $G_1,G_2,\ldots, G_n$ be arbitrary graphs. The graph $\Gamma_3(\bigoplus_{i=1}^{n} G_i)$ is disconnected and has $n^2+{n\choose{3}}$ components.
\end{corollary}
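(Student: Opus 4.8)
The plan is to read the component count straight off Theorem~\ref{Th-n-components} by counting summands, once we have checked that every summand in that decomposition is connected. Write $G=\bigoplus_{i=1}^n G_i$, where (as is implicit in the notation) each $G_i$ is a connected component; to make all the pieces below nonempty one should also assume $|V(G_i)|\geq 3$ for every $i$. Theorem~\ref{Th-n-components} exhibits $\Gamma_3(G)$ as a disjoint union of three families: the $n$ graphs $\Gamma_3(G_i)$; the graphs $\Gamma_2(G_i)\square G_j$, one for each \emph{ordered} pair $i\neq j$, so $n(n-1)$ of them; and the graphs $G_i\square G_j\square G_k$, one for each triple $i<j<k$, so $\binom{n}{3}$ of them. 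Since the components of a disjoint union are exactly the components of the summands listed together, the total number of components of $\Gamma_3(G)$ is the sum over all these summands of their individual numbers of components. So it remains to show each summand is connected, after which the count is $n+n(n-1)+\binom{n}{3}=n^2-n+n+\binom{n}{3}=n^2+\binom{n}{3}$.

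Next I would verify connectivity of every summand. For the pieces $\Gamma_3(G_i)$, and for the factors $\Gamma_2(G_i)$ appearing inside the mixed pieces, I would invoke the classical result of Fabila-Monroy et al.~\cite{FabilaMonroy2012} that $\Gamma_k(H)$ is connected whenever $H$ is connected (and $|V(H)|\geq k$); applying this with $H=G_i$ shows $\Gamma_3(G_i)$ and $\Gamma_2(G_i)$ are connected. For the Cartesian-product summands I would use the standard fact that $A\square B$ is connected if and only if both $A$ and $B$ are connected: applied once it gives connectivity of $\Gamma_2(G_i)\square G_j$, and applied twice (or via associativity of $\square$) it gives connectivity of $G_i\square G_j\square G_k$. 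Hence each of the $n^2+\binom{n}{3}$ summands is a single connected component, which establishes the stated count.

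For the "disconnected" claim, it suffices to observe that for every integer $n\geq 2$ we have $n^2+\binom{n}{3}\geq 4>1$, so $\Gamma_3(G)$ has at least two components and is therefore disconnected.

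The step most in need of care is pinning down the hypotheses under which the formula is \emph{exact}. If some component $G_i$ has fewer than three vertices, then $\Gamma_3(G_i)$ is empty (and if it has fewer than two vertices, the corresponding $\Gamma_2$-factors vanish too), so the three tallies above would need adjustment; this is why the clean statement really wants each $G_i$ connected with $|V(G_i)|\geq 3$. Beyond that caveat, the argument is just the additivity of the component count over disjoint unions, the connectivity-preservation of token graphs, the connectivity of Cartesian products of connected graphs, and elementary binomial-coefficient arithmetic.
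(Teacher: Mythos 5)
Your argument is essentially the paper's: the corollary is stated there without a written proof as a direct consequence of Theorem \ref{Th-n-components}, the intended reasoning being exactly your tally of $n+n(n-1)+\binom{n}{3}=n^2+\binom{n}{3}$ summands in that decomposition, together with the observation that this number exceeds one. Your added verification that each summand is a single component (connectivity of $\Gamma_k$ of a connected graph plus connectivity of Cartesian products of connected graphs), and your caveat that the count is exact only when every $G_i$ is connected with at least three vertices, supply precision that the paper's phrase ``arbitrary graphs'' glosses over.
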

%\begin{proof}

Having derived the result for the \(3\)-token graph of disjoint union graphs with multiple components, we now shift our focus to a specific graph known as the cubical staircase graph. This graph possesses a unique structure and will, in subsequent analysis, be shown to be isomorphic to the \(3\)-token graph of a path. By exploring the properties of the cubical staircase graph, we aim to gain deeper insights into the characteristics of \(3\)-token graphs and their structural relationships.
\begin{definition}
    Let $n\geq 3$ be a natural number. The \textbf{cubical staircase} graph \(CS_n\) is a graph that is isomorphic to the graph \(G = (V(G), E(G))\) with
    \begin{align*}
        V(G)= &\{(i,j,k): 1 \leq i \leq n-2, 1\leq j \leq i, 1 \leq k \leq n-1-i\}\\
        \text{and}\\
        E(G)= &\{(i,j,k)(i,j,k+1): 1 \leq i \leq n-2, 1\leq j \leq i, 1 \leq k \leq n-2-i \} \\ &\cup \{(i,j,k)(i+1,j,k): 1 \leq i \leq n-3, 1\leq j \leq i, 1 \leq k \leq n-1-i\} \\ &\cup \{(i,j,k)(i,j+1,k): 1 \leq i \leq n-2, 1\leq j \leq i-1, 1 \leq k \leq n-1-i\}.    
    \end{align*}
\end{definition}
In the following example, we provide a particular cubical staircase graph $CS_n$ for $n=8$, as shown in Figure \ref{grafCS8}.

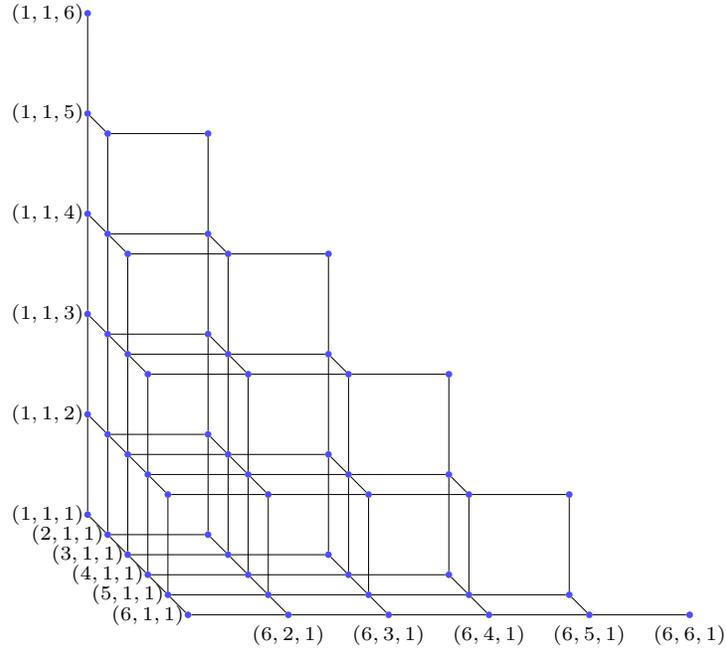
\begin{figure}[H]
		\begin{center} 
		\resizebox{0.8\textwidth}{!}{%
        \begin{tikzpicture}[scale=1, auto=center,
    every node/.style={font=\scriptsize},
    roundnode/.style={circle, fill=blue!70}]
				\node[roundnode] (a1) at (-13,14) {};  
				\node[roundnode] (a2) at (-13,9)  {}; 
				\node[roundnode] (a3) at (-13,4)  {};
				\node[roundnode] (a4) at (-13,-1) {};
                \node[roundnode] (a5) at (-13,-6)  {};
				\node[roundnode] (a6) at (-13,-11) {};
				\node[roundnode] (a7) at (-12,8) {};
                \node[roundnode] (a8) at (-12,3) {};  
				\node[roundnode] (a9) at (-12,-2) {}; 
				\node[roundnode] (a10) at (-12,-7) {};
				\node[roundnode] (a11) at (-12,-12) {};
				\node[roundnode] (a12) at (-7,8) {};
                \node[roundnode] (a13) at (-7,3) {};
				\node[roundnode] (a14) at (-7,-2) {};
				\node[roundnode] (a15) at (-7,-7) {};
                \node[roundnode] (a16) at (-7,-12) {}; 
				\node[roundnode] (a17) at (-11,2) {};
				\node[roundnode] (a18) at (-11,-3) {};
				\node[roundnode] (a19) at (-11,-8) {};
                \node[roundnode] (a20) at (-11,-13) {};
				\node[roundnode] (a21) at (-6,2) {};
				\node[roundnode] (a22) at (-6,-3) {};
                \node[roundnode] (a23) at (-6,-8) {};
				\node[roundnode] (a24) at (-6,-13) {};
				\node[roundnode] (a25) at (-1,2) {};
                \node[roundnode] (a26) at (-1,-3) {}; 
				\node[roundnode] (a27) at (-1,-8) {};
				\node[roundnode] (a28) at (-1,-13) {};
				\node[roundnode] (a29) at (-10,-4) {};
                \node[roundnode] (a30) at (-10,-9) {};
				\node[roundnode] (a31) at (-10,-14) {};
				\node[roundnode] (a32) at (-5,-4) {};
                \node[roundnode] (a33) at (-5,-9) {};
				\node[roundnode] (a34) at (-5,-14) {};
				\node[roundnode] (a35) at (0,-4) {};
                \node[roundnode] (a36) at (0,-9) {}; 
				\node[roundnode] (a37) at (0,-14) {};
				\node[roundnode] (a38) at (5,-4) {};
				\node[roundnode] (a39) at (5,-9) {};
                \node[roundnode] (a40) at (5,-14) {};
				\node[roundnode] (a41) at (-9,-10) {};
				\node[roundnode] (a42) at (-9,-15) {};
                \node[roundnode] (a43) at (-4,-10) {};
				\node[roundnode] (a44) at (-4,-15) {};
				\node[roundnode] (a45) at (1,-10) {};
                \node[roundnode] (a46) at (1,-15) {}; 
				\node[roundnode] (a47) at (6,-10) {};
				\node[roundnode] (a48) at (6,-15) {};
				\node[roundnode] (a49) at (11,-10) {};
                \node[roundnode] (a50) at (11,-15) {};
				\node[roundnode] (a51) at (-8,-16) {};
				\node[roundnode] (a52) at (-3,-16) {};
                \node[roundnode] (a53) at (2,-16) {};
				\node[roundnode] (a54) at (7,-16) {};
				\node[roundnode] (a55) at (12,-16) {};
                \node[roundnode] (a56) at (17,-16) {}; 
                \begin{scriptsize}
                    \draw[color=black] (-15,-11) node[scale=4]{$(1,1,1)$};
                    \draw[color=black] (-14,-12) node[scale=4]{$(2,1,1)$};
                    \draw[color=black] (-13,-13) node[scale=4]{$(3,1,1)$};
                    \draw[color=black] (-12,-14) node[scale=4]{$(4,1,1)$};
                    \draw[color=black] (-11,-15) node[scale=4]{$(5,1,1)$};
                    \draw[color=black] (-10,-16) node[scale=4]{$(6,1,1)$};
                    \draw[color=black] (-15,-6) node[scale=4]{$(1,1,2)$};
                    \draw[color=black] (-15,-1) node[scale=4]{$(1,1,3)$};
                    \draw[color=black] (-15,4) node[scale=4]{$(1,1,4)$};
                    \draw[color=black] (-15,9) node[scale=4]{$(1,1,5)$};
                    \draw[color=black] (-15,14) node[scale=4]{$(1,1,6)$};
                    \draw[color=black] (-3,-17) node[scale=4]{$(6,2,1)$};
                    \draw[color=black] (2,-17) node[scale=4]{$(6,3,1)$};
                    \draw[color=black] (7,-17) node[scale=4]{$(6,4,1)$};
                    \draw[color=black] (12,-17) node[scale=4]{$(6,5,1)$};
                    \draw[color=black] (17,-17) node[scale=4]{$(6,6,1)$};
                \end{scriptsize}
				\draw (a1) -- (a2);
                \draw (a2) -- (a3);
                \draw (a3) -- (a4);
                \draw (a4) -- (a5);
                \draw (a5) -- (a6);
                \draw (a7) -- (a8);
                \draw (a8) -- (a9);
                \draw (a9) -- (a10);
                \draw (a10) -- (a11);
                \draw (a12) -- (a13);
                \draw (a13) -- (a14);
                \draw (a14) -- (a15);
                \draw (a15) -- (a16);
                \draw (a17) -- (a18);
                \draw (a18) -- (a19);
                \draw (a19) -- (a20);
                \draw (a21) -- (a22);
                \draw (a22) -- (a23);
                \draw (a23) -- (a24);
                \draw (a25) -- (a26);
                \draw (a26) -- (a27);
                \draw (a27) -- (a28);
                \draw (a29) -- (a30);
                \draw (a30) -- (a31);
                \draw (a32) -- (a33);
                \draw (a33) -- (a34);
                \draw (a35) -- (a36);
                \draw (a36) -- (a37);
                \draw (a38) -- (a39);
                \draw (a39) -- (a40);
                \draw (a41) -- (a42);
                \draw (a43) -- (a44);
                \draw (a45) -- (a46);
                \draw (a47) -- (a48);
                \draw (a49) -- (a50);
                \draw (a51) -- (a52);
                \draw (a52) -- (a53);
                \draw (a53) -- (a54);
                \draw (a54) -- (a55);
                \draw (a55) -- (a56);
                \draw (a7) -- (a12);
                \draw (a8) -- (a13);
                \draw (a9) -- (a14);
                \draw (a10) -- (a15);
                \draw (a11) -- (a16);
                \draw (a17) -- (a21);
                \draw (a21) -- (a25);
                \draw (a18) -- (a22);
                \draw (a22) -- (a26);
                \draw (a19) -- (a23);
                \draw (a23) -- (a27);
                \draw (a20) -- (a24);
                \draw (a24) -- (a28);
                \draw (a29) -- (a32);
                \draw (a32) -- (a35);
                \draw (a35) -- (a38);
                \draw (a30) -- (a33);
                \draw (a33) -- (a36);
                \draw (a36) -- (a39);
                \draw (a31) -- (a34);
                \draw (a34) -- (a37);
                \draw (a37) -- (a40);
                \draw (a42) -- (a44);
                \draw (a44) -- (a46);
                \draw (a46) -- (a48);
                \draw (a48) -- (a50);
                \draw (a41) -- (a43);
                \draw (a43) -- (a45);
                \draw (a45) -- (a47);
                \draw (a47) -- (a49);
                \draw (a2) -- (a7);
                \draw (a3) -- (a8);
                \draw (a4) -- (a9);
                \draw (a5) -- (a10);
                \draw (a6) -- (a11);
                \draw (a8) -- (a17);
                \draw (a9) -- (a18);
                \draw (a10) -- (a19);
                \draw (a11) -- (a20);
                \draw (a13) -- (a21);
                \draw (a14) -- (a22);
                \draw (a15) -- (a23);
                \draw (a16) -- (a24);
                \draw (a18) -- (a29);
                \draw (a19) -- (a30);
                \draw (a20) -- (a31);
                \draw (a22) -- (a32);
                \draw (a23) -- (a33);
                \draw (a24) -- (a34);
                \draw (a26) -- (a35);
                \draw (a27) -- (a36);
                \draw (a28) -- (a37);
                \draw (a30) -- (a41);
                \draw (a31) -- (a42);
                \draw (a33) -- (a43);
                \draw (a34) -- (a44);
                \draw (a36) -- (a45);
                \draw (a37) -- (a46);
                \draw (a39) -- (a47);
                \draw (a40) -- (a48);
                \draw (a42) -- (a51);
                \draw (a44) -- (a52);
                \draw (a46) -- (a53);
                \draw (a48) -- (a54);
                \draw (a50) -- (a55); 
                \end{tikzpicture}%
                } 
            \caption{The Graph $CS_8$}\label{grafCS8}
		\end{center}
	\end{figure}

For $n=4,5,6,7$, the graphS $CS_n$ are described in Figure \ref{grafCS4567}.

\begin{figure}[H]
		\begin{center} 
		\resizebox{0.9\textwidth}{!}{
        \begin{tikzpicture}  
		[scale=1,auto=center,every node/.style={circle, fill=blue!70}] 
				\node (a1) at (-13,14){};  
				\node (a2) at (-13,9) {}; 
                \node (a3) at (-12,8){};
                \node (a4) at (-7,8) {};

				\draw (a1) -- (a2);
                \draw (a3) -- (a4);
                \draw (a2) -- (a3);
			\end{tikzpicture}
                \begin{tikzpicture}  
				[scale=1,auto=center,every node/.style={circle, fill=blue!70}] 
				\node (a1) at (-13,14) {};  
				\node (a2) at (-13,9)  {}; 
                \node (a3) at (-12,8) {};
                \node (a4) at (-7,8) {};
				\node (a5) at (-13,4)  {};
                \node (a6) at (-12,3) {};  
                \node (a7) at (-7,3) {};
                \node (a8) at (-11,2) {};
                \node (a9) at (-6,2) {};
                \node (a10) at (-1,2) {};
                %\node (a11) at (-13,-1) {};
                %\node (a12) at (-12,-2) {}; 
                %\node (a13) at (-7,-2) {};
                %\node (a14) at (-11,-3) {};
				%\node (a15) at (-6,-3) {};
                %\node (a16) at (-1,-3) {}; 
                %\node (a17) at (-10,-4) {};
                %\node (a18) at (-5,-4) {};
                %\node (a19) at (0,-4) {};
                %\node (a20) at (5,-4) {};

				\draw (a1) -- (a2);
                \draw (a2) -- (a5);
                %\draw (a5) -- (a11);
                \draw (a3) -- (a6);
                %\draw (a6) -- (a12);
                \draw (a4) -- (a7);
                %\draw (a7) -- (a13);
                %\draw (a8) -- (a14);
                %\draw (a9) -- (a15);
                %\draw (a10) -- (a16);
                \draw (a3) -- (a4);
                \draw (a6) -- (a7);
                %\draw (a12) -- (a13);
                \draw (a8) -- (a9);
                \draw (a9) -- (a10);
                %\draw (a14) -- (a15);
                %\draw (a15) -- (a16);
                %\draw (a17) -- (a18);
                %\draw (a18) -- (a19);
                %\draw (a19) -- (a20);
                \draw (a2) -- (a3);
                \draw (a5) -- (a6);
                %\draw (a11) -- (a12);
                \draw (a6) -- (a8);
                %\draw (a12) -- (a14);
                \draw (a7) -- (a9);
                %\draw (a13) -- (a15);
                %\draw (a14) -- (a17);
                %\draw (a15) -- (a18);
                %\draw (a16) -- (a19);
			\end{tikzpicture}
                \begin{tikzpicture}  
				[scale=1,auto=center,every node/.style={circle, fill=blue!70}] 
				\node (a1) at (-13,14) {};  
				\node (a2) at (-13,9)  {}; 
                \node (a3) at (-12,8) {};
                \node (a4) at (-7,8) {};
				\node (a5) at (-13,4)  {};
                \node (a6) at (-12,3) {};  
                \node (a7) at (-7,3) {};
                \node (a8) at (-11,2) {};
                \node (a9) at (-6,2) {};
                \node (a10) at (-1,2) {};
                \node (a11) at (-13,-1) {};
                \node (a12) at (-12,-2) {}; 
                \node (a13) at (-7,-2) {};
                \node (a14) at (-11,-3) {};
				\node (a15) at (-6,-3) {};
                \node (a16) at (-1,-3) {}; 
                \node (a17) at (-10,-4) {};
                \node (a18) at (-5,-4) {};
                \node (a19) at (0,-4) {};
                \node (a20) at (5,-4) {};
                %\node (a21) at (-13,-6)  {};
                %\node (a22) at (-12,-7) {};
                %\node (a23) at (-7,-7) {};
                %\node (a24) at (-11,-8) {};
                %\node (a25) at (-6,-8) {};
                %\node (a26) at (-1,-8) {};
                %\node (a27) at (-10,-9) {};
                %\node (a28) at (-5,-9) {};
                %\node (a29) at (0,-9) {}; 
                %\node (a30) at (5,-9) {};
                %\node (a31) at (-9,-10) {};
                %\node (a32) at (-4,-10) {};
                %\node (a33) at (1,-10) {};
                %\node (a34) at (6,-10) {};
                %\node (a35) at (11,-10) {};

				\draw (a1) -- (a2);
                \draw (a2) -- (a5);
                \draw (a5) -- (a11);
                %\draw (a11) -- (a21);
                \draw (a3) -- (a6);
                \draw (a6) -- (a12);
                %\draw (a12) -- (a22);
                \draw (a4) -- (a7);
                \draw (a7) -- (a13);
                %\draw (a13) -- (a23);
                \draw (a8) -- (a14);
                %\draw (a14) -- (a24);
                \draw (a9) -- (a15);
                %\draw (a15) -- (a25);
                \draw (a10) -- (a16);
                %\draw (a16) -- (a26);
                %\draw (a17) -- (a27);
               % \draw (a18) -- (a28);
               % \draw (a19) -- (a29);
                %\draw (a20) -- (a30);
                \draw (a3) -- (a4);
                \draw (a6) -- (a7);
                \draw (a12) -- (a13);
                %\draw (a22) -- (a23);
                \draw (a8) -- (a9);
                \draw (a9) -- (a10);
                \draw (a14) -- (a15);
                \draw (a15) -- (a16);
                %\draw (a24) -- (a25);
                %\draw (a25) -- (a26);
                \draw (a17) -- (a18);
                \draw (a18) -- (a19);
                \draw (a19) -- (a20);
                %\draw (a27) -- (a28);
                %\draw (a28) -- (a29);
                %\draw (a29) -- (a30);
                %\draw (a31) -- (a32);
                %\draw (a32) -- (a33);
                %\draw (a33) -- (a34);
                %\draw (a34) -- (a35);
                \draw (a2) -- (a3);
                \draw (a5) -- (a6);
                \draw (a11) -- (a12);
                %\draw (a21) -- (a22);
                \draw (a6) -- (a8);
                \draw (a12) -- (a14);
                %\draw (a22) -- (a24);
                \draw (a7) -- (a9);
                \draw (a13) -- (a15);
                %\draw (a23) -- (a25);
                \draw (a14) -- (a17);
                %\draw (a24) -- (a27);
                \draw (a15) -- (a18);
                %\draw (a25) -- (a28);
                \draw (a16) -- (a19);
                %\draw (a26) -- (a29);
                %\draw (a27) -- (a31);
                %\draw (a28) -- (a32);
                %\draw (a29) -- (a33);
                %\draw (a30) -- (a34);
			\end{tikzpicture}
                \begin{tikzpicture}  
				[scale=1,auto=center,every node/.style={circle, fill=blue!70}] 
				\node (a1) at (-13,14) {};  
				\node (a2) at (-13,9)  {}; 
                \node (a3) at (-12,8) {};
                \node (a4) at (-7,8) {};
				\node (a5) at (-13,4)  {};
                \node (a6) at (-12,3) {};  
                \node (a7) at (-7,3) {};
                \node (a8) at (-11,2) {};
                \node (a9) at (-6,2) {};
                \node (a10) at (-1,2) {};
                \node (a11) at (-13,-1) {};
                \node (a12) at (-12,-2) {}; 
                \node (a13) at (-7,-2) {};
                \node (a14) at (-11,-3) {};
				\node (a15) at (-6,-3) {};
                \node (a16) at (-1,-3) {}; 
                \node (a17) at (-10,-4) {};
                \node (a18) at (-5,-4) {};
                \node (a19) at (0,-4) {};
                \node (a20) at (5,-4) {};
                \node (a21) at (-13,-6)  {};
                \node (a22) at (-12,-7) {};
                \node (a23) at (-7,-7) {};
                \node (a24) at (-11,-8) {};
                \node (a25) at (-6,-8) {};
                \node (a26) at (-1,-8) {};
                \node (a27) at (-10,-9) {};
                \node (a28) at (-5,-9) {};
                \node (a29) at (0,-9) {}; 
                \node (a30) at (5,-9) {};
                \node (a31) at (-9,-10) {};
                \node (a32) at (-4,-10) {};
                \node (a33) at (1,-10) {};
                \node (a34) at (6,-10) {};
                \node (a35) at (11,-10) {};
                %\node (a36) at (-13,-11) {};
				%\node (a37) at (-12,-12) {};
                %\node (a38) at (-7,-12) {}; 
                %\node (a39) at (-11,-13) {};
				%\node (a40) at (-6,-13) {};
				%\node (a41) at (-1,-13) {};
				%\node (a42) at (-10,-14) {};                
				%\node (a43) at (-5,-14) {};
				%\node (a44) at (0,-14) {};
                %\node (a45) at (5,-14) {};
				%\node (a46) at (-9,-15) {};
				%\node (a47) at (-4,-15) {};
                %\node (a48) at (1,-15) {}; 
				%\node (a49) at (6,-15) {};
                %\node (a50) at (11,-15) {};
				%\node (a51) at (-8,-16) {};
				%\node (a52) at (-3,-16) {};
                %\node (a53) at (2,-16) {};
				%\node (a54) at (7,-16) {};
				%\node (a55) at (12,-16) {};
                %\node (a56) at (17,-16) {}; 

				\draw (a1) -- (a2);
                \draw (a2) -- (a5);
                \draw (a5) -- (a11);
                \draw (a11) -- (a21);
                %\draw (a21) -- (a36);
                \draw (a3) -- (a6);
                \draw (a6) -- (a12);
                \draw (a12) -- (a22);
                %\draw (a22) -- (a37);
                \draw (a4) -- (a7);
                \draw (a7) -- (a13);
                \draw (a13) -- (a23);
                %\draw (a23) -- (a38);
                \draw (a8) -- (a14);
                \draw (a14) -- (a24);
                %\draw (a24) -- (a39);
                \draw (a9) -- (a15);
                \draw (a15) -- (a25);
                %\draw (a25) -- (a40);
                \draw (a10) -- (a16);
                \draw (a16) -- (a26);
                %\draw (a26) -- (a41);
                \draw (a17) -- (a27);
                %\draw (a27) -- (a42);
                \draw (a18) -- (a28);
               %\draw (a28) -- (a43);
                \draw (a19) -- (a29);
               %\draw (a29) -- (a44);
                \draw (a20) -- (a30);
                %\draw (a30) -- (a45);
                %\draw (a31) -- (a46);
                %\draw (a32) -- (a47);
                %\draw (a33) -- (a48);
                %\draw (a34) -- (a49);
                %\draw (a35) -- (a50);
                %\draw (a51) -- (a52);
                %\draw (a52) -- (a53);
                %\draw (a53) -- (a54);
                %\draw (a54) -- (a55);
                %\draw (a55) -- (a56);
                \draw (a3) -- (a4);
                \draw (a6) -- (a7);
                \draw (a12) -- (a13);
                \draw (a22) -- (a23);
                %\draw (a37) -- (a38);
                \draw (a8) -- (a9);
                \draw (a9) -- (a10);
                \draw (a14) -- (a15);
                \draw (a15) -- (a16);
                \draw (a24) -- (a25);
                \draw (a25) -- (a26);
                %\draw (a39) -- (a40);
                %\draw (a40) -- (a41);
                \draw (a17) -- (a18);
                \draw (a18) -- (a19);
                \draw (a19) -- (a20);
                \draw (a27) -- (a28);
                \draw (a28) -- (a29);
                \draw (a29) -- (a30);
                %\draw (a42) -- (a43);
                %\draw (a43) -- (a44);
                %\draw (a44) -- (a45);
                %\draw (a46) -- (a47);
                %\draw (a47) -- (a48);
                %\draw (a48) -- (a49);
                %\draw (a49) -- (a50);
                \draw (a31) -- (a32);
                \draw (a32) -- (a33);
                \draw (a33) -- (a34);
                \draw (a34) -- (a35);
                \draw (a2) -- (a3);
                \draw (a5) -- (a6);
                \draw (a11) -- (a12);
                \draw (a21) -- (a22);
                %\draw (a36) -- (a37);
                \draw (a6) -- (a8);
                \draw (a12) -- (a14);
                \draw (a22) -- (a24);
                %\draw (a37) -- (a39);
                \draw (a7) -- (a9);
                \draw (a13) -- (a15);
                \draw (a23) -- (a25);
                %\draw (a38) -- (a40);
                \draw (a14) -- (a17);
                \draw (a24) -- (a27);
                %\draw (a39) -- (a42);
                \draw (a15) -- (a18);
                \draw (a25) -- (a28);
                %\draw (a40) -- (a43);
                \draw (a16) -- (a19);
                \draw (a26) -- (a29);
                %\draw (a41) -- (a44);
                \draw (a27) -- (a31);
               % \draw (a42) -- (a46);
                \draw (a28) -- (a32);
                %\draw (a43) -- (a47);
                \draw (a29) -- (a33);
               % \draw (a44) -- (a48);
                %\draw (a45) -- (a49);
                \draw (a30) -- (a34);
                %\draw (a46) -- (a51);
                %\draw (a47) -- (a52);
                %\draw (a48) -- (a53);
                %\draw (a49) -- (a54);
                %\draw (a50) -- (a55);
			\end{tikzpicture}}
			\caption{Graphs $CS_4$, $CS_5$, $CS_6$, and $CS_7$}\label{grafCS4567}
		\end{center}
	\end{figure}
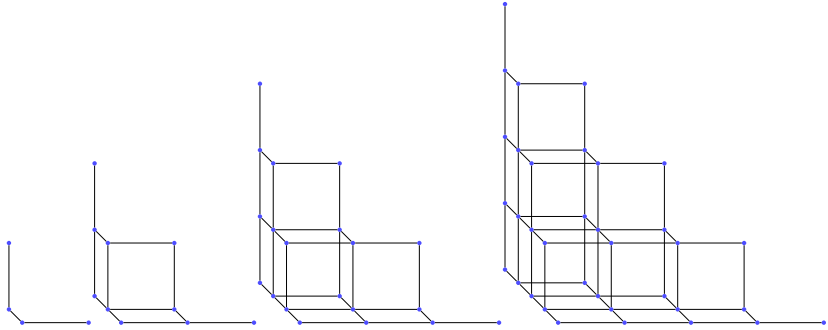

The following lemma provides the distance between any two distinct vertices in the graph \(CS_n\), offering a clearer understanding of the graph's structural properties.

\begin{lemma}\label{lemmadCSn}
    For any natural number $n\geq 4$, let $(a_1,a_2,a_3),(b_1,b_2,b_3) \in V(CS_n)$ be arbitrary. Then
    \begin{align*}
        d_{CS_n}\left((a_1,a_2,a_3),(b_1,b_2,b_3) \right)=|a_1-b_1|+|a_2-b_2|+|a_3-b_3|.
    \end{align*}
\end{lemma}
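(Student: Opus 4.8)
The plan is to establish the distance formula via a matching upper and lower bound. For the upper bound, I would exhibit an explicit walk between $(a_1,a_2,a_3)$ and $(b_1,b_2,b_3)$ of length $|a_1-b_1|+|a_2-b_2|+|a_3-b_3|$. Inspecting the edge set of $CS_n$, the three families of edges correspond precisely to incrementing/decrementing one coordinate at a time: the first family changes $k$ (the third coordinate), the second changes $i$ (the first coordinate), and the third changes $j$ (the second coordinate). So the natural candidate walk adjusts coordinates one at a time. The subtlety is that the ranges of valid coordinates are interdependent (the bound on $k$ depends on $i$, and the bound on $j$ also depends on $i$), so I would first move the first coordinate $i$ toward $b_1$ in unit steps while keeping $j=a_2$ and $k=a_3$ — but I must check that $(a_2,a_3)$ stays in range along the way. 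Since $a_2\le\min(a_1,b_1)$ would not be automatic, the cleaner order is: first decrease $j$ to $1$ if necessary isn't needed either. Actually the safe strategy is to move $i$ first in the direction that never violates constraints, then fix $j$, then fix $k$; I would argue that moving $i$ toward $b_1$ is always legal because shrinking $i$ only loosens no constraints but one can reorder so that whenever $i$ increases we have already ensured $j\le i$ holds (it does, since $j=a_2\le a_1\le i$ when $i$ is increasing past $a_1$), and $k\le n-1-i$ must be maintained — here if $i$ increases, $n-1-i$ decreases, so I'd instead move $k$ down first when $b_1>a_1$. So the honest approach: split into cases on the signs of $b_1-a_1$, and in each case pick an order of coordinate adjustments (among the $\le 3$ phases) that keeps every intermediate triple inside $V(CS_n)$; this is a finite case check.

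For the lower bound, I would define the potential function $\Phi\big((x_1,x_2,x_3)\big)=x_1+x_2+x_3$ wait — that is not the right monovariant since two coordinates can move oppositely. Instead I would use the standard fact that each edge of $CS_n$ changes exactly one coordinate by exactly $1$, hence along any walk of length $\ell$ from $(a_1,a_2,a_3)$ to $(b_1,b_2,b_3)$, the total variation in each coordinate is at most the number of steps that touch that coordinate, and these counts sum to $\ell$; since the net change in coordinate $t$ is $|a_t-b_t|$ and net change is at most total variation, we get $\ell \ge |a_1-b_1|+|a_2-b_2|+|a_3-b_3|$. This shows the displayed quantity is a lower bound for $d_{CS_n}$, and combined with the walk from the first part it is exactly the distance.

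I would write this up as: (1) observe the three edge types are exactly unit coordinate moves; (2) lower bound via the "one coordinate changes by one per edge" argument, which is immediate and needs no case analysis; (3) upper bound by constructing an explicit geodesic, handling the order-of-moves question by a small case split on whether $b_1 \ge a_1$ or $b_1 < a_1$ (and noting $j$- and $k$-adjustments can always be done once $i$ is at its final value, since then the ranges $1\le j\le b_1$ and $1\le k\le n-1-b_1$ are fixed and both $a$- and $b$-triples respect them, so moving along a line segment within a grid stays feasible). The main obstacle is purely the feasibility bookkeeping in step (3): making sure every intermediate vertex of the constructed walk actually satisfies $1\le j\le i$ and $1\le k\le n-1-i$ simultaneously. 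I expect this to require care but no deep idea — essentially one shows that moving $i$ monotonically toward $b_1$ while holding $j,k$ fixed is legal provided one first, if $i$ is to increase, verifies $a_3\le n-1-b_1$; if that fails one reduces $a_3$ first (legal, since decreasing $k$ never violates anything), and symmetrically for $j$. Enumerating these subcases completes the proof.
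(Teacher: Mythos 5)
Your proposal is correct and follows the same overall strategy as the paper: an upper bound from an explicit walk that adjusts one coordinate at a time, plus a lower bound from the observation that every edge of $CS_n$ changes exactly one coordinate by exactly $1$, so the length of any walk is at least the sum of the net coordinate changes. Your lower-bound argument is essentially the paper's (the paper states it more telegraphically via its characterization of $E(CS_n)$), and your write-up of it is clean and complete. The genuine difference is in the upper bound, and here your extra care is not pedantry but necessary: the paper simply concatenates the three segments in the fixed order ``first coordinate, then second, then third,'' holding the remaining coordinates constant, and never checks that the intermediate triples lie in $V(CS_n)$. That fixed order can in fact leave the vertex set: in $CS_6$, from $(1,1,4)$ to $(4,4,1)$ the paper's first segment passes through $(2,1,4)$, which violates $k\le n-1-i=3$ and so is not a vertex. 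Your plan — case split on the sign of $b_1-a_1$, moving $k$ down (respectively $j$ down) before increasing (respectively decreasing) $i$ whenever the constraint $k\le n-1-i$ (respectively $j\le i$) would otherwise be violated — repairs exactly this issue; note also that in those cases the preliminary $k$- or $j$-moves go toward $b_3$ or $b_2$ anyway (since $a_3>n-1-b_1\ge b_3$, resp.\ $a_2>b_1\ge b_2$), so no steps are wasted and the walk still has length $|a_1-b_1|+|a_2-b_2|+|a_3-b_3|$. In short: same method, but your feasibility bookkeeping supplies a detail the paper's proof actually needs.
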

\begin{proof}
    Consider the graph $CS_n$ for $n \geq 4$. Let 
    $(a_1, a_2, a_3), (b_1, b_2, b_3) \in V(CS_n).$ We observe for some cases.
    \begin{itemize}
        \item[(i)] Cases $a_1 \leq b_1$, there exists a path $$(a_1,a_2,a_3)-(a_1+1,a_2,a_3)-(a_1+2,a_2,a_3)-\cdots-(b_1,a_2,a_3).$$
        \item[(ii)] Cases $a_1 > b_1$, there exist a path
        $$(a_1,a_2,a_3)-(a_1-1,a_2,a_3)-(a_1-2,a_2,a_3)-\cdots-(b_1,a_2,a_3).$$
    \end{itemize}
    So, from Cases (i) and (ii), we get $d_{CS_n}\left((a_1,a_2,a_3),(b_1,a_2,a_3) \right)=|a_1-b_1|$.
    Now, we observe for following cases.
    \begin{itemize}
        \item[(iii)] Cases $a_2 \leq b_2$. There exists a path $$(b_1,a_2,a_3)-(b_1,a_2+1,a_3)-(b_1,a_2+2,a_3)-\cdots-(b_1,b_2,a_3).$$
        \item[(iv)] Cases $a_2 > b_2$. There exists a path
        $$(b_1,a_2,a_3)-(b_1,a_2-1,a_3)-(b_1,a_2-2,a_3)-\cdots-(b_1,b_2,a_3).$$
    \end{itemize}
    Consequently, we get $d_{CS_n}\left((b_1,a_2,a_3),(b_1,b_2,a_3) \right)=|a_2-b_2|$.
    Similar to Cases (i) and (ii) together with Cases (iii) and (iv), we have $$d_{CS_n}\left((b_1,b_2,a_3),(b_1,b_2,b_3) \right)=|a_3-b_3|$$ and as a result, we obtain   $$d_{CS_n}\left((a_1,a_2,a_3),(b_1,b_2,b_3) \right) \leq |a_1-b_1|+|a_2-b_2|+|a_3-b_3|.$$
    Since $(a_1,a_2,a_3)(b_1,b_2,b_3) \in E(CS_n)$ if and only if either $a_1=b_1, a_2=b_2$, and $b_3=a_3+1$; or $a_1=b_1, a_3=b_3$, and $b_2=a_2+1$; or $a_2=b_2, a_3=b_3$, and $b_1=a_1+1$, we get $d_{CS_n}\left((a_1,a_2,a_3),(b_1,b_2,b_3)\right) \geq |a_1-b_1|+|a_2-b_2|+|a_3-b_3|.$
\end{proof}

%mulai dari sini ke bawah dikasih bridgingnya dulu dicek ke AI semua
The lemma below demonstrates that the graph \( CS_n \) contains no triangles.
\begin{lemma}\label{lemma 3-token don't have C_3}
    For every $n\geq 4$, graph $CS_n$ has no triangle.
\end{lemma}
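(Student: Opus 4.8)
The plan is to show that $CS_n$ is bipartite; since a triangle is a cycle of length $3$, i.e.\ an odd cycle, no bipartite graph can contain one, and the lemma follows at once.

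First I would record an elementary observation about the edge set: each of the three families of edges in the definition of $CS_n$ changes exactly one of the three coordinates, and changes it by exactly $1$. (Equivalently, one may invoke Lemma~\ref{lemmadCSn}: two vertices are adjacent precisely when $d_{CS_n}=1$, i.e.\ when they differ in a single coordinate by $\pm 1$.) Next I would introduce the $2$-colouring $\phi\colon V(CS_n)\to\{0,1\}$ given by $\phi(i,j,k)=(i+j+k)\bmod 2$ and check that it is proper: if $(i,j,k)(i',j',k')\in E(CS_n)$, then by the observation above $i'+j'+k'=i+j+k\pm 1$, so $\phi(i,j,k)\neq\phi(i',j',k')$. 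Hence adjacent vertices receive different colours, $CS_n$ is bipartite, and in particular it has no triangle.

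There is essentially no real obstacle here: the only point requiring care is the case check that each of the three edge types alters the coordinate sum by $\pm 1$, and this is immediate from reading off the three sets in the definition. As an alternative to the bipartiteness argument, one could argue directly: if $u,v,w$ were pairwise adjacent vertices of $CS_n$, then $v$ and $w$ are each obtained from $u$ by adding $\pm 1$ to a single coordinate; a short case analysis on whether the two affected coordinates coincide (and on the signs) shows that in every case $v$ and $w$ either coincide or differ in their coordinates by a total of $2$, contradicting $vw\in E(CS_n)$ together with Lemma~\ref{lemmadCSn}. Either route is routine, and I would present the bipartiteness version, as it is the shortest.
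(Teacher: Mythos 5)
Your proof is correct, but it takes a different route from the paper. The paper argues directly: it fixes a vertex $(a,b,c)$, lists all of its possible neighbours (the six vertices obtained by changing one coordinate by $\pm 1$), and then checks by an exhaustive enumeration that no two of these neighbours are themselves adjacent, so no triangle can pass through $(a,b,c)$. You instead observe that every edge of $CS_n$, by definition, changes exactly one coordinate by exactly $1$, so the colouring $\phi(i,j,k)=(i+j+k)\bmod 2$ is proper; hence $CS_n$ is bipartite and contains no odd cycle, in particular no triangle. Your argument is shorter, avoids the long neighbour-pair enumeration (where one must also worry about which of the listed triples actually lie in $V(CS_n)$), and proves the stronger statement that $CS_n$ has no odd cycles at all. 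It is worth noting that the paper itself uses exactly your parity colouring later, in the proof of Theorem~\ref{chiCS_n} on the chromatic number, so your approach in effect unifies the two results; and since the colouring argument relies only on the definition of the edge set (the appeal to Lemma~\ref{lemmadCSn} is optional — only the trivial direction ``edge $\Rightarrow$ one coordinate changes by $1$'' is needed), there is no circularity. The paper's brute-force check buys nothing extra here beyond being self-contained at the level of individual edges.
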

\begin{proof}
    Let $(a,b,c) \in V(CS_n)$ where $n \geq 4$. We know that
    \begin{align*}
    (a,b,c)(a+1,b,c), &(a,b,c)(a,b+1,c), (a,b,c)(a,b,c+1), (a,b,c)(a-1,b,c),\\ &(a,b,c)(a,b-1,c), (a,b,c)(a,b,c-1) \in E(CS_n). 
    \end{align*}
    In this case, we only consider for 
    \begin{align*}
        &(a+1,b,c)(a+2,b,c), (a+1,b,c)(a+1,b+1,c), (a+1,b,c)(a+1,b,c+1),\\ &(a+1,b,c)(a+1,b-1,c), (a+1,b,c)(a+1,b,c-1) \in E(CS_n)
    \end{align*}
    \begin{align*}
        &(a,b+1,c)(a+1,b+1,c), (a,b+1,c)(a,b+2,c), (a,b+1,c)(a,b+1,c+1),\\ & (a,b+1,c)(a-1,b+1,c),(a,b+1,c)(a,b+1,c-1) \in E(CS_n)
    \end{align*}
    \begin{align*}
        &(a,b,c+1)(a+1,b,c+1), (a,b,c+1)(a,b+1,c+1), (a,b,c+1)(a,b,c+2), \\ &(a,b,c+1)(a-1,b,c+1),(a,b,c+1)(a,b-1,c+1)\in E(CS_n)
    \end{align*}
    \begin{align*}
        &(a-1,b,c)(a-1,b+1,c), (a-1,b,c)(a-1,b,c+1), (a-1,b,c)(a-1,b,c), \\ &(a-1,b,c)(a-1,b-1,c),(a-1,b,c)(a-1,b,c-1)\in E(CS_n)
    \end{align*}
    \begin{align*}
        &(a,b-1,c)(a+1,b-1,c), (a,b-1,c)(a,b-1,c+1), (a,b-1,c)(a-1,b-1,c), \\ &(a,b-1,c)(a,b-2,c), (a,b-1,c)(a,b-1,c-1) \in E(CS_n).
    \end{align*}
    Also we have
    \begin{align*}
        &(a,b,c-1)(a+1,b,c-1), (a,b,c-1)(a,b+1,c-1), (a,b,c-1)(a-1,b,c-1), \\ &(a,b,c-1)(a,b-1,c-1), (a,b,c-1)(a,b,c-2) \in E(CS_n).
    \end{align*}
    Since \begin{align*}
        &(a+2,b,c)(a,b,c), (a+1,b+1,c)(a,b,c), (a+1,b,c+1) (a,b,c), \\ &(a+1,b-1,c)(a,b,c), (a+1,b,c-1)(a,b,c), (a,b+2,c)(a,b,c), \\ &(a,b+1,c+1)(a,b,c), (a-1,b+1,c)(a,b,c), (a,b+1,c-1)(a,b,c),  \\ &(a,b,c+2)(a,b,c), (a-1,b,c+1)(a,b,c),(a,b-1,c+1)(a,b,c), \\ &(a-2,b,c)(a,b,c), (a-1,b-1,c)(a,b,c), (a-1,b,c-1)(a,b,c), \\ &(a,b-2,c)(a,b,c), (a,b-1,c-1)(a,b,c), (a,b+1,c-1)(a,b,c), \\ &(a,b,c+2)(a,b,c) \notin E(CS_n)
    \end{align*} so we have no any subgraph that isomorphic to graph $C_3$.  
\end{proof}

The following result establishes that the graph \( CS_n \) is either one-colorable or two-colorable.

\begin{theorem}\label{chiCS_n}
    For any positive integer $n\geq 3$. Graph $CS_n$ has chromatic number 
    \begin{align*}
        \chi(CS_n) = \begin{cases}
            1, & \text{ if } n=3,\\
            2, & \text{ if } n \geq 3.
            \end{cases}
    \end{align*}
\end{theorem}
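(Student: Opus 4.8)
The plan is to treat the two cases separately, working directly from the explicit description of $V(CS_n)$ and $E(CS_n)$. For $n=3$, the defining inequalities force $i=1$, $j=1$, $k=1$, so $V(CS_3)=\{(1,1,1)\}$, while each of the three families of edges involves an index range that is empty when $n=3$. Hence $CS_3\cong K_1$ and $\chi(CS_3)=1$. So the first branch is immediate, and the remaining work is to show $\chi(CS_n)=2$ for $n\geq 4$ (which is presumably the intended hypothesis of the second branch; I would also point out that the ``$n\ge 3$'' in the statement should read ``$n\ge 4$'').

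For the lower bound $\chi(CS_n)\geq 2$ when $n\geq 4$, it suffices to exhibit one edge, since a graph with an edge is not $1$-colorable. I would use $(1,1,1)(1,1,2)$: this lies in $E(CS_n)$ because it has the form $(i,j,k)(i,j,k+1)$ with $i=1\le n-2$, $j=1\le i$, and $k=1\le n-2-i=n-3$, the last inequality holding precisely because $n\ge 4$. For the upper bound $\chi(CS_n)\le 2$, I would define a proper $2$-coloring by $c(i,j,k):=(i+j+k)\bmod 2\in\{0,1\}$. Reading off $E(CS_n)$, every edge joins two vertices that differ in exactly one coordinate by exactly $1$; therefore the coordinate sums of the two endpoints have opposite parity, so $c$ assigns them different colors. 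Hence $c$ is proper and $\chi(CS_n)\le 2$. Combining the two bounds gives $\chi(CS_n)=2$. (Alternatively, one can invoke Lemma~\ref{lemmadCSn}: since distances in $CS_n$ equal $\ell_1$-distances, every closed walk uses, in each coordinate direction, equally many $+1$ and $-1$ steps, so it has even length; thus $CS_n$ has no odd cycle and is bipartite.)

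I do not expect a genuine obstacle here: once the edge set is unpacked, the parity coloring works verbatim. The only points deserving a line of care are (a) confirming that an edge exists for \emph{every} $n\ge 4$ (handled by the explicit edge above) and (b) noting that Lemma~\ref{lemma 3-token don't have C_3} is not what is used for the upper bound—triangle-freeness alone would not yield $\chi\le 2$—whereas the parity argument (or the isometric-embedding consequence of Lemma~\ref{lemmadCSn}) rules out all odd cycles simultaneously.
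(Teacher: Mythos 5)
Your proposal is correct and matches the paper's argument: the paper also handles $n=3$ via $CS_3\cong K_1$ and proves the $n\geq 4$ case by coloring each vertex according to the parity of $a+b+c$, checking that every edge changes exactly one coordinate by one. Your explicit exhibition of the edge $(1,1,1)(1,1,2)$ for the lower bound is a minor addition the paper leaves implicit, but the approach is essentially identical.
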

\begin{proof}
    For $n=3$, we have that the graph $CS_n$ is isomorphic to $K_1$. Then, we get $\chi(CS_n)=1$. Now, let $n \geq 4$, for any vertex $(a,b,c) \in V(CS_n)$, we color the vertex with the first color if $a+b+c$ is even and we color the vertex with second color if $a+b+c$ is odd.
    Now, for all $(a_1,a_2,a_3)(b_1,b_2,b_3) \in E(CS_n)$, we get
    \begin{itemize}
        \item[(i)] Case 1: $a_1=b_1$, $a_2=b_2$, and $b_3=a_3+1$. 
        \item[(ii)] Case 2: $a_1=b_1$, $a_3=b_3$, and $b_2=a_2+1$.
        \item[(iii)] Case 3: $a_2=b_2$, $a_3=b_3$, and $b_1=a_1+1$.
    \end{itemize}
    For all cases, we have that $a_1+a_2+a_3$ and $b_1+b_2+b_3=a_1+a_2+a_3+1$ are odd and even respectively or even and odd respectively.
    Therefore, vertex $(a_1,a_2,a_3)$ and vertex $(b_1,b_2,b_3)$ have different colors.
\end{proof}

In these three following theorems, we determine the clique number \( \omega(CS_n) \),  the diameter and the independence number of the graph \( CS_n \) when \( n \geq 3 \), respectively.

\begin{theorem}\label{bilangan kromatik dan clique}
    For any positive integer $n\geq 3$, the graph $CS_n$ has the following properties:
    \begin{align*}
        \omega(CS_n)= \begin{cases}
            1, & \text{ if } n=3,\\
            2, & \text{ if } n > 3.
        \end{cases}
    \end{align*}
\end{theorem}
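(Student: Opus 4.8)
The plan is to treat the two cases $n=3$ and $n>3$ separately, mirroring the statement, and in each case combine an easy clique construction with Lemma \ref{lemma 3-token don't have C_3}.

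For $n=3$, the index constraints in the definition of $CS_n$ force $i=1$, $j=1$, $k=1$, so $V(CS_3)$ consists of a single vertex and $CS_3\cong K_1$; hence $\omega(CS_3)=1$, consistent with the observation already used in the proof of Theorem \ref{chiCS_n}. For $n\geq 4$ I would prove the two inequalities. The bound $\omega(CS_n)\leq 2$ is immediate from Lemma \ref{lemma 3-token don't have C_3}: any clique on three or more vertices would contain a $K_3$, i.e. a triangle, which $CS_n$ has none of. For $\omega(CS_n)\geq 2$ it suffices to exhibit a single edge: with $i=j=1$ the vertices $(1,1,1)$ and $(1,1,2)$ both lie in $V(CS_n)$ for $n\geq 4$ (the condition $1\leq k\leq n-1-i$ reads $1\leq 2\leq n-2$), and $(1,1,1)(1,1,2)\in E(CS_n)$ by the first family of edges in the definition (the condition $1\leq k\leq n-2-i$ reads $1\leq 1\leq n-3$). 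Hence $CS_n$ contains $K_2$, so $\omega(CS_n)\geq 2$, and therefore $\omega(CS_n)=2$.

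Given Lemma \ref{lemma 3-token don't have C_3}, there is no real obstacle here; the only point requiring a moment's care is checking the index ranges so that the exhibited edge genuinely exists for every $n\geq 4$. Alternatively, the upper bound can be read off from $\omega(CS_n)\leq\chi(CS_n)=2$ via Theorem \ref{chiCS_n}, with any single edge still supplying the matching lower bound.
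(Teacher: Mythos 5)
Your proposal is correct and follows essentially the same route as the paper: $CS_3\cong K_1$ for $n=3$, the triangle-freeness from Lemma \ref{lemma 3-token don't have C_3} giving $\omega(CS_n)\le 2$, and an edge giving $\omega(CS_n)\ge 2$ (the paper merely asserts this bound as clear, whereas you verify an explicit edge $(1,1,1)(1,1,2)$ with its index ranges). No substantive difference beyond that extra care.
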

\begin{proof}
    Let $n \geq 3$. %By Theorem \ref{isomorfisGamma3Pn}, $\Gamma_3(P_n) \cong CS_n$.
    %\item[\mbox{\rm (i)}] Using Theorem \ref{chiCS_n}, we obtain $$\chi(CS_n)=2.$$
    For $n=3$, we know that $CS_3$ is a $K_1$ so $\omega(CS_3)=1$. Then, for $n \geq 4$, it is clear that $\omega(CS_n) \geq 2$. From Lemma \ref{lemma 3-token don't have C_3} we know that the $CS_n$ does not have a triangle. We conclude that $CS_n$ does not have $C_3$ as an induced subgraph. In other word,  $\omega(CS_n) < 3$. Thus,  $\omega(CS_n) = 2$.
\end{proof}

\begin{theorem}\label{teo_diamCSn}
    For any $n \geq 4$, the graph $CS_n$ has the following property
    \begin{align*}
        diam(CS_n)=3(n-3).
    \end{align*}
\end{theorem}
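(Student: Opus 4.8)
The plan is to reduce the entire statement to Lemma~\ref{lemmadCSn}, which already identifies the graph distance in $CS_n$ with the $\ell_1$-distance of the coordinate triples. Consequently $\mathrm{diam}(CS_n)$ equals the maximum of $|a_1-b_1|+|a_2-b_2|+|a_3-b_3|$ as $(a_1,a_2,a_3)$ and $(b_1,b_2,b_3)$ range over $V(CS_n)$, so the whole argument becomes an optimisation over the admissible index set $\{(i,j,k): 1\le i\le n-2,\ 1\le j\le i,\ 1\le k\le n-1-i\}$.

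For the lower bound I would exhibit an explicit extremal pair. Take $A=(n-2,\,n-2,\,1)$ and $B=(1,\,1,\,n-2)$. One checks directly from the definition of $V(CS_n)$ that both are vertices: for $A$, $1\le n-2\le n-2$, $1\le n-2\le n-2$, and $1\le 1\le n-1-(n-2)=1$; for $B$, $1\le 1\le n-2$, $1\le 1\le 1$, and $1\le n-2\le n-1-1=n-2$. By Lemma~\ref{lemmadCSn}, $d_{CS_n}(A,B)=(n-3)+(n-3)+(n-3)=3(n-3)$, so $\mathrm{diam}(CS_n)\ge 3(n-3)$.

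For the upper bound, fix arbitrary $A=(p,q,r)$ and $B=(p',q',r')$ in $V(CS_n)$ and set $M=\max\{p,p'\}$, $m=\min\{p,p'\}$. The constraints $1\le q\le p$ and $1\le q'\le p'$ force $|q-q'|\le M-1$, while $1\le r\le n-1-p$ and $1\le r'\le n-1-p'$ force $|r-r'|\le (n-1-m)-1=n-2-m$. Since $|p-p'|=M-m$, Lemma~\ref{lemmadCSn} gives
\[
d_{CS_n}(A,B)\le (M-m)+(M-1)+(n-2-m)=2(M-m)+(n-3),
\]
and because $1\le m\le M\le n-2$ we have $M-m\le n-3$, hence $d_{CS_n}(A,B)\le 3(n-3)$. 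Combining with the lower bound yields $\mathrm{diam}(CS_n)=3(n-3)$.

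The only genuinely delicate point is coupling the coordinate bounds correctly: the estimate $|q-q'|\le M-1$ must use that the larger second coordinate sits at the vertex whose first coordinate is $M$, and $|r-r'|\le n-2-m$ must use that the larger third coordinate sits at the vertex whose first coordinate is $m$; once this is done carefully, the extremal configuration is exactly $q,q'$ at the two ends of $[1,M]$ and $r,r'$ at the two ends of $[1,n-1-m]$, with $M=n-2$, $m=1$, which is precisely the pair $A,B$ above. No induction or structural fact beyond Lemma~\ref{lemmadCSn} is required, so I expect the write-up to be short.
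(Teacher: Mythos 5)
Your proof is correct and follows essentially the same route as the paper: both reduce the statement to Lemma~\ref{lemmadCSn} and use the extremal pair $(1,1,n-2)$, $(n-2,n-2,1)$ for the lower bound. The only difference is in the upper bound, where the paper simply observes that each coordinate difference is at most $n-3$ (all three coordinates lie in $[1,n-2]$), so your coupled estimate $2(M-m)+(n-3)$ --- and the cautionary remark about which vertex carries the larger second or third coordinate, which is in fact unnecessary since $q,q'\le M$ and $r,r'\le n-1-m$ hold unconditionally --- is more refinement than the argument needs.
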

\begin{proof}
    Consider the vertices $(1,1,n-2), (n-2, n-2, 1) \in V(CS_n)$. Using Lemma \ref{lemmadCSn}, we obtain: $$d_{CS_n}\left((1,1,n-2),(n-2, n-2, 1)\right) = 3(n-3).$$

    Now, for any $(a_1,b_1,c_1),(a_2,b_2,c_2) \in V(CS_n)$, the following holds:
    \begin{align*}
        d_{CS_n}\left((a_1,b_1,c_1),(a_2,b_2,c_2) \right)&=\left|a_1-a_2\right|+\left|b_1-b_2\right|+\left|c_1-c_2\right|\\
        &\leq (n-2-1)+(n-2-1)+(n-2-1)\\&=3(n-3). 
    \end{align*}
\end{proof}

\begin{theorem}\label{teo_alpha_CSn}
    For any positive integer $n \geq 3$, the graph $CS_n$ satisfies that
    \begin{align*}
    \alpha(CS_n) = \begin{cases}
            \frac{n^3-3n^2+2n}{12}, & \text{ if } n \text{ is even},\\
            \frac{n^3-3n^2+5n-3}{12}, & \text{ if } n \text{ is odd}.
        \end{cases}
    \end{align*}
\end{theorem}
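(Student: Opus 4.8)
The plan is to sandwich $\alpha(CS_n)$ between a large explicit independent set and a matching bound, with the two estimates meeting exactly at the stated value.

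Dispose of $n=3$ first: there $CS_3\cong K_1$, and both branches of the formula give $1$. So assume $n\geq 4$. By Theorem~\ref{chiCS_n}, $CS_n$ is bipartite, properly $2$-coloured by the parity of $i+j+k$; write $A=\{(i,j,k):i+j+k\text{ even}\}$ and $B=\{(i,j,k):i+j+k\text{ odd}\}$, both of which are independent. The first step is a counting lemma. Using $|V(CS_n)|=\sum_{i=1}^{n-2}i(n-1-i)=\binom n3$, and the signed count $|A|-|B|=\sum_{i,j,k}(-1)^{i+j+k}$, which after carrying out the sums over $j$ and $k$ becomes
\[
|A|-|B|=\sum_{i=1}^{n-2}(-1)^i\Bigl(\tfrac{(-1)^i-1}{2}\Bigr)\Bigl(\tfrac{(-1)^{n-1-i}-1}{2}\Bigr),
\]
one checks that the inner product is nonzero only when $i$ and $n$ are both odd, so the whole sum is $0$ for $n$ even and $-\tfrac{n-1}{2}$ for $n$ odd (the $\tfrac{n-1}{2}$ odd values of $i$ in $\{1,\dots,n-2\}$ each contribute $-1$). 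Hence $\max(|A|,|B|)$ equals $\tfrac12\binom n3$ when $n$ is even and $\tfrac12\bigl(\binom n3+\tfrac{n-1}{2}\bigr)$ when $n$ is odd, and a routine simplification identifies these with the two cases of the claimed formula. This yields the lower bound $\alpha(CS_n)\geq\max(|A|,|B|)$.

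For the upper bound I will use the elementary fact that $|I|\leq |V|-|M|$ for every independent set $I$ and matching $M$, and build $M$ in two stages. Stage~1: for each $(i,j)$ with $1\leq j\leq i\leq n-2$, the fibre $\{(i,j,k):1\leq k\leq n-1-i\}$ is a subpath of $CS_n$, so pair its vertices consecutively, $(i,j,2\ell-1)$ with $(i,j,2\ell)$. A vertex survives Stage~1 exactly when $n-1-i$ is odd, i.e. $i\equiv n\pmod 2$, the survivor then being $(i,j,n-1-i)$. Stage~2: for each such level $i$, the survivors $(i,1,n-1-i),\dots,(i,i,n-1-i)$ induce a path via the $j$-direction edges, and these paths for distinct $i$ are mutually non-adjacent (by Lemma~\ref{lemmadCSn}, two survivors $(i,j,n-1-i)$, $(i',j',n-1-i')$ lie at distance $2|i-i'|+|j-j'|$); pair consecutive vertices along each. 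When $i$ is even this path is matched perfectly, when $i$ is odd exactly one vertex, $(i,i,n-1-i)$, remains. For $n$ even only even $i$ occur, so $M$ is a perfect matching and $\alpha(CS_n)\leq|V|-|M|=\tfrac12\binom n3$. For $n$ odd, the vertices missed by $M$ are precisely $(i,i,n-1-i)$ for the $\tfrac{n-1}{2}$ odd values $i\in\{1,3,\dots,n-2\}$, so $|M|=\tfrac12\bigl(\binom n3-\tfrac{n-1}{2}\bigr)$ and $\alpha(CS_n)\leq|V|-|M|$ again equals the claimed value, closing the gap.

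The main obstacle is the upper bound: one must recognise that $\alpha(CS_n)$ is exactly the size of the larger colour class and certify it by a matching whose deficiency is \emph{exactly} $\tfrac{n-1}{2}$ — neither more nor less — and the fibre-then-survivor-path pairing is engineered precisely so that its deficiency matches the colour-class imbalance computed in the counting step. The remaining points are bookkeeping: verifying that every pair used is genuinely an edge of $CS_n$ given the index ranges in the definition, and confirming the mutual non-adjacency of the leftover paths at different levels.
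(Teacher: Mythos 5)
Your argument is correct, and its second half takes a genuinely different (and tighter) route than the paper's. The lower bound is essentially the same in both: the parity classes of $i+j+k$ are independent since every edge of $CS_n$ changes exactly one coordinate by one, and your signed count giving $|A|-|B|=0$ for $n$ even and $-\tfrac{n-1}{2}$ for $n$ odd correctly identifies the larger class, whose size simplifies to the stated formulas (the paper merely asserts these cardinalities without the computation). The real divergence is the optimality argument: the paper only verifies that the parity class is \emph{maximal} --- adding any single vertex creates an edge --- together with an unjustified claim that it is the ``only'' independent set of that cardinality, and maximality alone does not determine $\alpha$; your matching certificate supplies exactly the missing upper bound. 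The construction checks out: the stage-1 pairs $(i,j,2\ell-1)(i,j,2\ell)$ are edges because $2\ell\le n-1-i$, the stage-2 pairs use the $j$-direction edges at $k=n-1-i$ and involve only vertices left unmatched by stage 1, the matching is perfect when $n$ is even, and when $n$ is odd it misses precisely the $\tfrac{n-1}{2}$ vertices $(i,i,n-1-i)$ with $i$ odd, so $\alpha(CS_n)\le |V(CS_n)|-|M|$ meets the lower bound in both cases. Two minor remarks: the mutual non-adjacency of the survivor paths at different levels (via Lemma~\ref{lemmadCSn}) is not actually needed, since a matching only requires vertex-disjoint edges; and the bipartition you invoke is exactly the $2$-colouring from Theorem~\ref{chiCS_n}, so your proof reuses the paper's colouring but replaces its maximality argument with a rigorous deficiency-$\tfrac{n-1}{2}$ matching bound.
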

\begin{proof}
    If $n$ is odd, we can construct an
    independent set in graph $CS_n$,
    \begin{align*}
        A=\{(i,j,k) \in V(CS_n): i+j+k \text{ is odd}\}
    \end{align*}
    we get $|A|=\frac{n^3-3n^2+5n-3}{12}$ and it is clear that $A$ is the only independent set with cardinality $\frac{n^3-3n^2+5n-3}{12}$ in $CS_n$.  From $A$, if we add one vertex $(i,j,k) \in V(CS_n) \setminus A$, i.e. $i+j+k$ is even.
    \begin{itemize}
        \item[(i)] If $i<n-2$, we can found $(i+1,j,k) \in A$ such that $(i,j,k)(i+1,j,k) \in E(CS_n)$. 
        \item[(ii)] If $i=n-2$, we can found $(i-1,j,k) \in A$ such that $(i-1,j,k)(i,j,k) \in E(CS_n)$. 
    \end{itemize}
    So, $A \cup \{(i,j,k)\}$ is not an independent set. If $n$ is even, we have
    \begin{align*}
        B = \{(i,j,k) \in V(CS_n): i+j+k \text{ is even}\}\\
        C = \{(i,j,k) \in V(CS_n): i+j+k \text{ is odd}\} 
    \end{align*} is an independent set in graph $CS_n$.
    We have that $|B|=|C|=\frac{n^3-3n^2+2n}{12}$. It is clear that the independent sets with cardinality $\frac{n^3-3n^2+2n}{12}$ in $CS_n$ are only the sets $B$ and $C$. From $B$, if we add one vertex $(i,j,k) \in V(CS_n) \setminus B$, i.e. $i+j+k$ is odd.
    \begin{itemize}
        \item[(i)] If $i<n-2$, we can found $(i+1,j,k) \in B$ such that $(i,j,k)(i+1,j,k) \in E(CS_n)$. 
        \item[(ii)] If $i=n-2$, we can found $(i-1,j,k) \in B$ such that $(i-1,j,k)(i,j,k) \in E(CS_n)$. 
    \end{itemize}
    So, $B \cup \{(i,j,k)\}$ is not an independent set. Then, the case for the set $C$ will be the same as the case when \(n\) is odd, so it follows that \(C \cup \{(i,j,k)\}\) is not an independent set.
\end{proof}

In the study of token graphs, understanding the structural relationships between graphs provides valuable insights. In the following theorem, we demonstrate that the $3$-token graph of the path graph \( P_n \) is isomorphic to the graph \( CS_n \) for \( n \geq 4 \), establishing a clear connection between these two graph classes.

\begin{theorem}\label{isomorfisGamma3Pn}
    Let $P_n$, 
    $n \geq 3$ be path graph. Then $\Gamma_3(P_n) \cong CS_n$.
\end{theorem}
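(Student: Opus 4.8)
The plan is to exhibit an explicit vertex bijection and verify directly that it is an isomorphism. For $n=3$ both graphs are trivial: $CS_3\cong K_1$ by definition, and $\Gamma_3(P_3)$ is the single $3$-subset $\{1,2,3\}$ with no edges, so assume $n\geq 4$. Take $V(P_n)=\{1,2,\dots,n\}$ with $E(P_n)=\{\{m,m+1\}:1\le m\le n-1\}$, and identify a vertex of $\Gamma_3(P_n)$ with the sorted triple $(a,b,c)$, $1\le a<b<c\le n$. I would define
\[
\phi(\{a,b,c\})=(\,b-1,\;a,\;n-c+1\,).
\]
First I would check $\phi$ is a bijection onto $V(CS_n)$: from $a<b<c\le n$ one reads off $1\le a\le b-1\le n-2$ and $1\le n-c+1\le n-b=n-1-(b-1)$, which are exactly the inequalities defining $V(CS_n)$ for the triple $(b-1,a,n-c+1)$; conversely $(i,j,k)\mapsto\{\,j,\;i+1,\;n-k+1\,\}$ is well defined (the constraints $1\le j\le i\le n-2$, $1\le k\le n-1-i$ force $j<i+1<n-k+1\le n$), and the two maps are mutually inverse.

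Next, the adjacency check. By the definition of the $3$-token graph and because $E(P_n)$ consists precisely of the pairs $\{m,m+1\}$, two sorted triples are adjacent in $\Gamma_3(P_n)$ if and only if one is obtained from the other by moving a single coordinate by $\pm1$ into an unoccupied value. So the (at most six) neighbours of $(a,b,c)$ are $(a\pm1,b,c)$, $(a,b\pm1,c)$, $(a,b,c\pm1)$, each present under the obvious feasibility condition (e.g. $(a,b+1,c)$ exists iff $b+1\le c-1$; $(a-1,b,c)$ exists iff $a\ge 2$). Applying $\phi$ coordinatewise, with $(i,j,k)=\phi(\{a,b,c\})$, these six moves become respectively $(i,j-1,k)$, $(i,j+1,k)$, $(i-1,j,k)$, $(i+1,j,k)$, $(i,j,k+1)$, $(i,j,k-1)$ --- i.e.\ each token move changes exactly one $CS_n$-coordinate by $\pm1$. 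It then remains to verify, in each of the six cases, that the feasibility condition for the token move is literally the same as the condition for the target to be a vertex of $CS_n$ and to be joined to $(i,j,k)$ by an edge of $CS_n$ (for instance, ``$b+1\le c-1$'' rewrites as ``$k\le n-2-i$'', which is precisely when $(i+1,j,k)\in V(CS_n)$ and $(i,j,k)(i+1,j,k)\in E(CS_n)$). Since $\phi$ is a bijection and the six $\phi$-images above exhaust the neighbours of $(i,j,k)$ in $CS_n$, this gives $AB\in E(\Gamma_3(P_n))\iff\phi(A)\phi(B)\in E(CS_n)$, so $\phi$ is the desired isomorphism.

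Conceptually the map is forced. After the substitution $x_1=a-1$, $x_2=b-a-1$, $x_3=c-b-1$, $x_4=n-c$, the graph $\Gamma_3(P_n)$ is the induced subgraph of the integer lattice on the simplex $\{x\in\mathbb{Z}_{\ge0}^4:x_1+x_2+x_3+x_4=n-3\}$ whose edge directions are $\pm(e_1-e_2),\pm(e_2-e_3),\pm(e_3-e_4)$; similarly, with $x_1=j-1$, $x_2=i-j$, $x_3=k-1$, $x_4=n-1-i-k$, the graph $CS_n$ is the induced subgraph of the same simplex with edge directions $\pm(e_1-e_2),\pm(e_2-e_4),\pm(e_3-e_4)$. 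Both direction sets are ``paths'' on $\{1,2,3,4\}$ (namely $1\!-\!2\!-\!3\!-\!4$ and $1\!-\!2\!-\!4\!-\!3$), and the coordinate transposition $(3\,4)$ carries one onto the other --- which is exactly the reflection $c\mapsto n-c+1$ appearing in $\phi$. This also isolates the main obstacle: the first natural guess $\{a,b,c\}\mapsto(b-1,a,c-b)$ is a vertex bijection but not an isomorphism, because moving the middle token $b$ changes two of the coordinates $(b-1,a,c-b)$ simultaneously; replacing the gap $c-b$ by $n-c+1$ is the one adjustment that makes every token move a single-coordinate move, after which the remaining verification is routine bookkeeping with the inequalities in the definition of $CS_n$.
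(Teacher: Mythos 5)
Your proposal is correct and follows essentially the same route as the paper: the map you define, $\{a,b,c\}\mapsto(b-1,a,n-c+1)$ with $a<b<c$, is exactly the paper's $\psi(\{x_i,x_j,x_k\})=(j-1,i,n+1-k)$, and both arguments proceed by checking bijectivity onto $V(CS_n)$ and then matching adjacency in both directions. Your bookkeeping (organizing the adjacency check by the six single-token moves and exhibiting the explicit inverse, plus the lattice-simplex remark) is a cleaner packaging of the same verification the paper carries out through its case analysis, so no further changes are needed.
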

\begin{proof}
    For $n=3$, we know that  $$\Gamma_3(P_3) \cong K_1 \cong CS_3.$$
    Now, let $V(P_n)=\{x_i: 1\leq i \leq n\}$ and $E(P_n)=\{x_ix_{i+1}: 1 \leq i \leq n-1\}$ where $n \geq 4$. First, we have that
    \begin{align*}
        \left|V(CS_n)\right|&=\sum_{i=1}^{n-2} (i)(n-1-i)\\
        &= \sum_{i=1}^{n-2} \left((n-1)i-i^2 \right)\\
        &= \frac{n(n-1)(n-2)}{6}\\
        &= \binom{n}{3}=\left| V(\Gamma_3(P_n))\right|.
    \end{align*}
    Then for any $A=\{x_i,x_j,x_k\}, B=\{x_r,x_s,x_t\} \in V(\Gamma_3(P_n))$, $AB \in E(\Gamma_3(P_n))$ if and only if $A \triangle B =\{x_p,x_{p+1}\}$ for some $p \in \{i,j,k,r,s,t\}$. 
    Now, we construct a mapping
    $$\psi: V(\Gamma_3(P_n)) \to CS_n$$
    by $\psi\left(\{x_i,x_j,x_k\}\right)=(j-1,i, n+1-k\}$ for any $i<j<k$ with $1\leq i <j<k \leq n$. Let $\{x_i,x_j,x_k\},\{x_r,x_s,x_t\} \in V(\Gamma_3(P_n))$. If $\{x_i,x_j,x_k\}\{x_r,x_s,x_t\} \in E(\Gamma_3(P_n))$, then we have 
    $\{x_i,x_j,x_k\}\triangle \{x_r,x_s,x_t\}=\{x_p,x_{p+1}\}$ for some $x_px_{p+1} \in E(P_n)$. WLOG, let $x_i=x_r, x_j=x_s$, i.e. $i=r, j=s$ and $k=p, t=p+1$.
    \begin{itemize}
        \item [(i)] If $i<j<k$, then
        \begin{align*}
            \psi(\{x_i,x_j,x_k\})&\psi(\{x_r,x_s,x_t\})=(j-1,i,n+1-k)(s-1,r,n+1-t)\\&=(j-1,i,n-p+1)(j-1,i,n-p) \in E(CS_n).
        \end{align*}
    \item [(ii)] If $i<k<j$, then
        \begin{align*}
        \psi(\{x_i,x_j,x_k\})&\psi(\{x_r,x_s,x_t\})=(k-1,i,n+1-j)(t-1,r,n+1-s)\\&=(p-1,i,n+1-j)(p-1+1,i,n+1-j) \in E(CS_n).
        \end{align*}
    \item [(iii)] If $j<i<k$, then
        \begin{align*}
        \psi(\{x_i,x_j,x_k\})&\psi(\{x_r,x_s,x_t\})=(i-1,j,n+1-k)(r-1,j,n+1-t)\\&=(i-1,j,n-p+1)(i-1,s,n-p) \in E(CS_n).
        \end{align*}
    \item [(iv)] If $j<k<i$, then
        \begin{align*}
        \psi(\{x_i,x_j,x_k\})&\psi(\{x_r,x_s,x_t\})=(k-1,j,n+1-i)(t-1,s,n+1-r)\\&=(p-1,j,n+1-i)(p,j,n+1-i) \in E(CS_n).
        \end{align*}
    \item [(v)] If $k<i<j$, then
        \begin{align*}
        \psi(\{x_i,x_j,x_k\})&\psi(\{x_r,x_s,x_t\})=(i-1,k,n+1-j)(r-1,t,n+1-s)\\&=(i-1,p,n+1-j)(i-1,p+1,n+1-j) \in E(CS_n).
        \end{align*}
    \item [(vi)] If $k<j<i$, then
        \begin{align*}
        \psi(\{x_i,x_j,x_k\})&\psi(\{x_r,x_s,x_t\})=(j-1,k,n+1-i)(s-1,t,n+1-r)\\&=(j-1,p,n+1-i)(j-1,p+1,n+1-i) \in E(CS_n).
        \end{align*}
    \end{itemize}

    Conversely, let $\psi(\{x_i,x_j,x_k\})\psi(\{x_r,x_s,x_t\}) \in E(CS_n)$. Let $i<j<k$ and $r<s<t$. Then,
    $$(j-1,i,n+1-k)(s-1,r,n+1-t) \in E(CS_n).$$
    It means that either $j-1=s-1, i=r$ and $n+1-t=n+1-k+1$ or $j-1=s-1, n+1-k=n+1-t$ and $r=i+1$, or $i=r, n+1-k=n+1-t$ and $s-1=j-1+1$. This is equivalent to, 
    \begin{align*}
        j=s, i=r, \text{ and } k=t+1, \text{ or}\\
        j=s, k=t, \text{ and } r=i+1, \text{ or}\\
        i=r, k=t, \text{ and } s=j+1.
    \end{align*}
    Thus, $\{x_r,x_s,x_t\}=\{x_i,x_j,x_{k-1}\}$ or $\{x_r,x_s,x_t\}=\{x_{i+1},x_j,x_k\}$ or $\{x_r,x_s,x_t\}=\{x_i,x_{j+1},x_k\}$. Therefore, $\{x_i,x_j,x_k\}\{x_r,x_s,x_t\} \in E(\Gamma_3(P_n))$, so that $\psi$ is an isomorphism between $CS_n$ with the $3-$token graph $\Gamma_3(P_n)$.
\end{proof}

As a direct consequence of Theorem \ref{isomorfisGamma3Pn}, we obtain the following corollaries.

\begin{corollary}
    For any positive integer $n \geq 3$, the graph $\Gamma_3(P_n)$ has the following properties:
    \begin{enumerate}
        \item $\chi(\Gamma_3(P_n))=\omega(\Gamma_3(P_n))= \begin{cases}
            1, & \text{ if } n=3,\\
            2, & \text{ if } n > 3 \end{cases}$
        \item $\alpha(\Gamma_3(P_n)) = \begin{cases}
            \frac{n^3-3n^2+2n}{12}, & \text{ if } n \text{ is even},\\
            \frac{n^3-3n^2+5n-3}{12}, & \text{ if } n \text{ is odd}.
        \end{cases}$
        \item $diam(\Gamma_3(P_n))=3(n-3)$.
    \end{enumerate}
\end{corollary}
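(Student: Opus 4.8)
The plan is to reduce every item to the already-established facts about the cubical staircase graph via the isomorphism supplied by Theorem~\ref{isomorfisGamma3Pn}. First I would record the elementary observation that the chromatic number, the clique number, the independence number, and the diameter are all graph invariants: if $\phi\colon V(G)\to V(H)$ is an isomorphism, then $\phi$ carries proper colourings to proper colourings, cliques to cliques, independent sets to independent sets, and preserves distances (since it preserves adjacency), so $\chi(G)=\chi(H)$, $\omega(G)=\omega(H)$, $\alpha(G)=\alpha(H)$, and $\mathrm{diam}(G)=\mathrm{diam}(H)$. This is the only conceptual content of the argument; everything else is citation.

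Next, for $n\ge 3$ Theorem~\ref{isomorfisGamma3Pn} gives $\Gamma_3(P_n)\cong CS_n$, so each parameter of $\Gamma_3(P_n)$ equals the corresponding parameter of $CS_n$. For item~(1) I would invoke Theorem~\ref{chiCS_n} (giving $\chi(CS_n)=1$ for $n=3$ and $\chi(CS_n)=2$ for $n>3$) together with Theorem~\ref{bilangan kromatik dan clique} (giving the identical values for $\omega(CS_n)$); transporting these along the isomorphism yields $\chi(\Gamma_3(P_n))=\omega(\Gamma_3(P_n))$ with the stated case split. For item~(2) I would simply quote Theorem~\ref{teo_alpha_CSn}, which already distinguishes the parity of $n$ and covers all $n\ge 3$, so $\alpha(\Gamma_3(P_n))=\alpha(CS_n)$ is exactly the claimed expression.

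For item~(3), I would split on $n$. For $n\ge 4$, Theorem~\ref{teo_diamCSn} gives $\mathrm{diam}(CS_n)=3(n-3)$, hence $\mathrm{diam}(\Gamma_3(P_n))=3(n-3)$. For the boundary case $n=3$, Theorem~\ref{teo_diamCSn} does not apply, but $\Gamma_3(P_3)\cong K_1$, whose diameter is $0$, and $3(n-3)=0$ when $n=3$, so the formula holds there as well. Assembling the three items completes the proof.

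I do not expect any real obstacle here: the statement is a routine corollary of Theorem~\ref{isomorfisGamma3Pn} and the four theorems on $CS_n$. The only point requiring a sentence of care is the $n=3$ endpoint of item~(3), where the diameter theorem for $CS_n$ is stated only for $n\ge 4$ and one must instead use $\Gamma_3(P_3)\cong K_1$ directly; similarly one should note that items~(1) and~(2) already have their $n=3$ cases handled by the theorems being cited (through $CS_3\cong K_1$).
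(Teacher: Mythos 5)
Your proposal is correct and follows exactly the same route as the paper, which simply cites Theorem~\ref{isomorfisGamma3Pn} together with Theorems~\ref{chiCS_n}, \ref{bilangan kromatik dan clique}, \ref{teo_alpha_CSn}, and \ref{teo_diamCSn}. Your explicit treatment of the $n=3$ endpoint of item~(3) via $\Gamma_3(P_3)\cong K_1$ is a small point of extra care that the paper's one-line proof leaves implicit.
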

\begin{proof}
    It is clear from Theorem \ref{isomorfisGamma3Pn}, Theorem \ref{chiCS_n}, Theorem \ref{bilangan kromatik dan clique}, Theorem \ref{teo_alpha_CSn}, Theorem \ref{teo_diamCSn}. 
\end{proof}

To understand the structure of the automorphism group of \(\Gamma_3(P_n)\), where \(P_n\) represents the path graph with \(n \geq 3\), we delve into its symmetries and transformations. The automorphism group, \(Aut(\Gamma_3(P_n))\), captures all the graph's self-isomorphisms, preserving vertex connectivity. The following theorem establishes the precise characterization of \(Aut(\Gamma_3(P_n))\) based on the value of \(n\). 

\begin{theorem}\label{AutGamma3Pn}
   Let $P_n$, $n \geq 3$ be a path graph. Then,
    \begin{align*}
    Aut(\Gamma_3(P_n))\cong \begin{cases}
        \mathbb{Z}_1 &,\text{if} ~n=3,\\
        \mathbb{Z}_2 \times \mathbb{Z}_2 &,\text{if} ~n=6,\\
        \mathbb{Z}_2 &,\text{otherwise}.
    \end{cases} 
    \end{align*}
\end{theorem}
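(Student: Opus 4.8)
The plan is to work through the isomorphism $\Gamma_3(P_n)\cong CS_n$ from Theorem~\ref{isomorfisGamma3Pn} and reason about $\mathrm{Aut}(CS_n)$ instead, since the coordinate structure of $CS_n$ makes symmetries easier to see. First I would dispose of the trivial case $n=3$, where $CS_3\cong K_1$ and so the automorphism group is $\mathbb{Z}_1$. For $n\geq 4$, the key is to identify one obvious nontrivial automorphism and then prove there are no others (except in the exceptional case $n=6$). The obvious automorphism comes from the reflection symmetry of the path $P_n$: the map $x_i\mapsto x_{n+1-i}$ is an automorphism of $P_n$, and any automorphism of $G$ induces an automorphism of $\Gamma_k(G)$ by acting on $k$-subsets. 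This gives an element of order $2$ in $\mathrm{Aut}(\Gamma_3(P_n))$, hence $\mathbb{Z}_2$ is always a subgroup. In coordinates on $CS_n$, using the explicit $\psi$ from Theorem~\ref{isomorfisGamma3Pn}, this reflection should correspond to the map $(i,j,k)\mapsto(\text{something involving } n-i, \dots)$ — I would compute exactly what it does to a triple $\{x_a,x_b,x_c\}$ to get its coordinate form.

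Next I would establish the upper bound $|\mathrm{Aut}(\Gamma_3(P_n))|\leq 2$ for $n\neq 3,6$. The natural strategy is a degree-sequence / distinguished-vertex argument. I would compute the degree of each vertex of $\Gamma_3(P_n)$ (equivalently, use the vertex degree theorem for $3$-token graphs that the authors mention developing, or compute directly in $CS_n$ from its edge set). The extreme vertices — those corresponding to $\{x_1,x_2,x_3\}$, $\{x_{n-2},x_{n-1},x_n\}$, and similar "corner" subsets — will have the smallest degrees, and an automorphism must permute these low-degree vertices among themselves. By tracking how few vertices realize the minimum degree and how they are connected, one pins down that any automorphism either fixes the two "ends" of the structure or swaps them, and that once this global choice is made the automorphism is forced on all of $CS_n$ by the rigidity of the grid-like adjacency (Lemma~\ref{lemmadCSn} gives $d_{CS_n}$ as an $\ell^1$-distance, so an automorphism is an $\ell^1$-isometry of the coordinate region, and the region $\{(i,j,k):1\le i\le n-2,\ 1\le j\le i,\ 1\le k\le n-1-i\}$ has enough asymmetry that its isometry group is small). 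This isometry-of-a-polytope viewpoint is probably the cleanest route: classify the $\ell^1$-isometries of that lattice region.

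The exceptional case $n=6$ needs separate treatment: here $|V(\Gamma_3(P_6))|=\binom{6}{3}=20$, and there is an extra symmetry making the group $\mathbb{Z}_2\times\mathbb{Z}_2$. I would exhibit the second generator explicitly. A plausible source is that for $\Gamma_k(P_n)$ there is a complementation-type or "middle" coincidence when $n=2k$ (here $2\cdot 3=6$): the extra automorphism likely comes from an exceptional self-similarity of $CS_6$, which one can verify directly on the $20$-vertex graph (e.g. by checking it is vertex-transitive on a suitable orbit or by matching the two "halves" $W_3$, $W_4$ of a related decomposition). I would verify the two generators commute and generate a group of order $4$, and confirm no further automorphisms by the same degree-counting argument, noting where that argument would have produced only $\mathbb{Z}_2$ but for the $n=6$ coincidence.

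The main obstacle I anticipate is the upper-bound step: showing rigidity for all $n\neq 3,6$ uniformly, and in particular correctly isolating \emph{why} $n=6$ is special while $n=4,5,7,8,\dots$ are not. The degree-sequence argument must be carried out carefully enough to see that for $n=6$ the minimum-degree vertices admit an extra matching-up, whereas for other $n$ the combinatorial constraints (different counts of low-degree vertices, asymmetry of the region shape) rule it out. I would likely organize this by first proving that any automorphism is determined by its action on a small "spanning" set of extreme vertices, then enumerating the possible actions on that set and checking which extend — a finite check for each residue/size class of $n$, with $n=6$ falling out as the lone extra case.
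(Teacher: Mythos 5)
Your route is essentially the paper's: reduce to $CS_n$ via Theorem~\ref{isomorfisGamma3Pn}, dispose of $n=3$, exhibit one involution for $n\geq 4$, and treat $n=6$ as an exceptional case with an extra symmetry giving $\mathbb{Z}_2\times\mathbb{Z}_2$. Where you differ, you are actually more informative than the paper. The paper simply declares that an automorphism of $CS_n$ must be the identity or the map $(i,j,k)\mapsto(n-1-i,k,j)$, and for $n=6$ it declares that the only automorphisms are $id,g_1,g_2,g_3$ and lists them in a table, with no argument for either upper bound; you instead derive the standing involution conceptually from the path reflection $x_i\mapsto x_{n+1-i}$ (which induces an automorphism of any $\Gamma_k(P_n)$), and your guess for the $n=6$ generator is exactly right and cleaner than the paper's table: since $n=2k$, complementation $A\mapsto V(P_6)\setminus A$ preserves symmetric differences, hence is an automorphism of $\Gamma_3(P_6)$, and together with the reflection it generates a Klein four-group (one checks, e.g., that $\{1,2,6\}$ goes to $\{3,4,5\}$ under complementation but to $\{1,5,6\}$ under reflection, so the two are distinct). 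Your proposed degree-sequence/rigidity argument for the upper bound is a genuine addition, since the paper offers none.

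One caution on the step you yourself flag as the obstacle: the ``$\ell^1$-isometry of the staircase region'' idea must be interpreted as classifying self-isometries of the \emph{finite} metric space $(V(CS_n),d_{CS_n})$, not as restricting signed-permutation-plus-translation isometries of $\mathbb{Z}^3$ to the region. The $n=6$ complementation automorphism is precisely an isometry of the point set that does not arise from any ambient affine isometry (e.g.\ in $\psi$-coordinates it sends $(1,1,4),(1,1,1)$ to $(4,4,1),(3,3,2)$, so difference vectors are not permuted up to sign), so a classification of ambient isometries would wrongly yield $\mathbb{Z}_2$ for all $n\geq4$. Thus the rigidity step needs the finer combinatorial analysis you sketch (corner/degree arguments plus extension checks), and until that is carried out your argument, like the paper's, establishes the lower bounds but only asserts the upper bounds; you are, however, no less complete on this point than the published proof.
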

\begin{proof}
    For $n=3$, we know that the graph $\Gamma_3(P_n)$ is isomorphic to $K_1$. Then, $Aut(\Gamma_3(P_3))=\{id\} \cong \mathbb{Z}_1$.
    For $n \geq 4$ and $n \neq 6$, by Theorem \ref{isomorfisGamma3Pn}, we have $\Gamma_3(P_n) \cong CS_n$. Let $f$ be an isomorphism on $CS_n$. Then, $f$ is either an identity mapping $id$ or a bijection function $g$ that maps $\{i,j,k\}$ to $\{n-1-i,k,j\}$ for every $(i,j,k) \in V(CS_n)$. It is clear that $g^2=g$. Hence, $Aut(CS_n)=\{id,g\}$ which is isomorphic to the group $\mathbb{Z}_2$. Therefore, we get
    $$Aut(\Gamma_3(P_n)) \cong Aut(CS_n) \cong \mathbb{Z}_2.$$
    For $n = 6$, the graph $CS_6$ is given as follows. Let $f$ be an isomorphism on $CS_6$. Then, the possible functions for $f$ are only the identity function or the bijective functions $g_1, g_2$, or $g_3$, with the following mappings.
    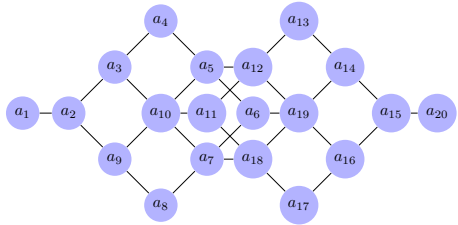
\begin{figure}[H]
		\begin{center} 
		\resizebox{0.5\textwidth}{!}{
                \begin{tikzpicture}  
				[scale=1,auto=center, roundnode/.style={circle, fill=blue!30}] 
				\node[roundnode] (a1) at (-6,0) {$a_1$};  
				\node[roundnode] (a2) at (-5,0)  {$a_2$}; 
				\node[roundnode] (a3) at (-4,1)  {$a_3$};
				\node[roundnode] (a4) at (-3,2) {$a_4$};
                \node[roundnode] (a5) at (-2,1)  {$a_5$};
				\node[roundnode] (a6) at (-1,0) {$a_6$};
				\node[roundnode] (a7) at (-2,-1) {$a_7$};
                \node[roundnode] (a8) at (-3,-2) {$a_8$};  
				\node[roundnode] (a9) at (-4,-1) {$a_9$}; 
				\node[roundnode] (a10) at (-3,0) {$a_{10}$};
				\node[roundnode] (a11) at (-2,-0) {$a_{11}$};
				\node[roundnode] (a12) at (-1,1) {$a_{12}$};
                \node[roundnode] (a13) at (0,2) {$a_{13}$};
				\node[roundnode] (a14) at (1,1) {$a_{14}$};
				\node[roundnode] (a15) at (2,0) {$a_{15}$};
                \node[roundnode] (a16) at (1,-1) {$a_{16}$}; 
				\node[roundnode] (a17) at (0,-2) {$a_{17}$};
				\node[roundnode] (a18) at (-1,-1) {$a_{18}$};
                \node[roundnode] (a19) at (0,0) {$a_{19}$};
				\node[roundnode] (a20) at (3,0) {$a_{20}$};
                
				\draw (a1) -- (a2);
                \draw (a2) -- (a3);
                \draw (a3) -- (a4);
                \draw (a4) -- (a5);
                \draw (a5) -- (a6);
                \draw (a6) -- (a7);
                \draw (a7) -- (a8);
                \draw (a8) -- (a9);
                \draw (a9) -- (a2);
                \draw (a10) -- (a3);
                \draw (a10) -- (a5);
                \draw (a10) -- (a7);
                \draw (a10) -- (a9);
                \draw (a10) -- (a11);
                \draw (a12) -- (a5);
                \draw (a18) -- (a7);
                \draw (a19) -- (a6);
                \draw (a11) -- (a12);
                \draw (a12) -- (a13);
                \draw (a13) -- (a14);
                \draw (a14) -- (a15);
                \draw (a15) -- (a16);
                \draw (a16) -- (a17);
                \draw (a17) -- (a18);
                \draw (a18) -- (a11);
                \draw (a19) -- (a12);
                \draw (a19) -- (a14);
                \draw (a19) -- (a16);
                \draw (a19) -- (a18);
                \draw (a15) -- (a20);
			\end{tikzpicture}}
		      \caption{Graph $CS_6$}
		\end{center}
	\end{figure}
    \begin{table}[H]
        \centering
        \begin{tabular}{|c|c|c|c|}
            $x \in V(CS_6)$ & $g_1(x)$ & $g_2(x)$ & $g_3(x)$\\ \hline
            $a_1$ & $a_1$ & $a_{20}$ & $a_{20}$ \\
            $a_2$ & $a_2$ & $a_{15}$ & $a_{15}$\\
            $a_3$ & $a_9$ & $a_{14}$ & $a_{16}$\\
            $a_4$ & $a_8$ & $a_{13}$ & $a_{17}$\\
            $a_5$ & $a_7$ & $a_{12}$ & $a_{18}$\\
            $a_6$ & $a_6$ & $a_{11}$ & $a_{11}$\\
            $a_7$ & $a_5$ & $a_{18}$ & $a_{12}$\\
            $a_8$ & $a_4$ & $a_{17}$ & $a_{13}$\\
            $a_9$ & $a_3$ & $a_{16}$ & $a_{14}$\\
            $a_{10}$ & $a_{10}$ & $a_{19}$ & $a_{19}$
            % $a_{11}$ & $a_{11}$ & $a_{6}$ & $a_{6}$\\
            % $a_{12}$ & $a_{18}$ & $a_{5}$ & $a_{7}$\\
            % $a_{13}$ & $a_{17}$ & $a_{4}$ & $a_{8}$\\
            % $a_{14}$ & $a_{16}$ & $a_{3}$ & $a_{9}$\\
            % $a_{15}$ & $a_{15}$ & $a_{2}$ & $a_{2}$\\
            % $a_{16}$ & $a_{14}$ & $a_{9}$ & $a_{3}$\\
            % $a_{17}$ & $a_{13}$ & $a_{4}$ & $a_{8}$\\
            % $a_{18}$ & $a_{12}$ & $a_{7}$ & $a_{5}$\\
            % $a_{19}$ & $a_{19}$ & $a_{10}$ & $a_{10}$\\
            % $a_{20}$ & $a_{20}$ & $a_{1}$ & $a_{1}$\\
        \end{tabular}\vspace{1cm}\begin{tabular}{|c|c|c|c|}
            $x \in V(CS_6)$ & $g_1(x)$ & $g_2(x)$ & $g_3(x)$\\ \hline
            % $a_1$ & $a_1$ & $a_{20}$ & $a_{20}$ \\
            % $a_2$ & $a_2$ & $a_{15}$ & $a_{15}$\\
            % $a_3$ & $a_9$ & $a_{14}$ & $a_{16}$\\
            % $a_4$ & $a_8$ & $a_{13}$ & $a_{17}$\\
            % $a_5$ & $a_7$ & $a_{12}$ & $a_{18}$\\
            % $a_6$ & $a_6$ & $a_{11}$ & $a_{11}$\\
            % $a_7$ & $a_5$ & $a_{18}$ & $a_{12}$\\
            % $a_8$ & $a_4$ & $a_{17}$ & $a_{13}$\\
            % $a_9$ & $a_3$ & $a_{16}$ & $a_{14}$\\
            % $a_{10}$ & $a_{10}$ & $a_{19}$ & $a_{19}$\\
            $a_{11}$ & $a_{11}$ & $a_{6}$ & $a_{6}$\\
            $a_{12}$ & $a_{18}$ & $a_{5}$ & $a_{7}$\\
            $a_{13}$ & $a_{17}$ & $a_{4}$ & $a_{8}$\\
            $a_{14}$ & $a_{16}$ & $a_{3}$ & $a_{9}$\\
            $a_{15}$ & $a_{15}$ & $a_{2}$ & $a_{2}$\\
            $a_{16}$ & $a_{14}$ & $a_{9}$ & $a_{3}$\\
            $a_{17}$ & $a_{13}$ & $a_{4}$ & $a_{8}$\\
            $a_{18}$ & $a_{12}$ & $a_{7}$ & $a_{5}$\\
            $a_{19}$ & $a_{19}$ & $a_{10}$ & $a_{10}$\\
            $a_{20}$ & $a_{20}$ & $a_{1}$ & $a_{1}$
        \end{tabular}
        \vspace{-1cm}
        \caption{Automorphism of $CS_6$}
    \end{table}
    Thus, $Aut(CS_6)=\{id,g_1,g_2,g_3\}$, where $g_1^2=id$, $g_2^2=id$, and $g_3^2=id$, which is isomorphic to the group $\mathbb{Z}_2 \times \mathbb{Z}_2$.
\end{proof}

%\begin{comment}
% \begin{theorem}[Arizka]
%     For any $n\geq 4$, the $3$-token graph $\Gamma_3(P_n\}$ satisfies that
%     \begin{align*}
%     \alpha(\Gamma_3(P_n)) = \begin{cases}
%             \frac{\left| V(\Gamma_3(P_n))
%             \right|}{2}, & \text{ if } n \text{ even},\\
%             \left| V(\Gamma_3(P_n))
%             \right|-\left\lfloor\frac{\left| V(\Gamma_3(P_n))
%             \right|}{2}\right\rfloor + 1, & \text{ if } n \text{ odd}.
%         \end{cases}
%     \end{align*}
% \end{theorem}
% \begin{proof}
    %Ide : menggunakan chromatic number terus untuk suatu warna yang berbeda dihilangkan. Untuk kasus genap, langsung dapat diperoleh setengahnya, if kasus yang tidak genap?
    % Based on Theorem \ref{bilangan kromatik dan clique} we know that for every $n \geq 4$ the chromatic number of the $3$-token graph $\Gamma_3(P_n\}$ is $2$, named the colors as $r$ and $g$. For $n$ even number, then the $|V(\Gamma(P_n))|= \binom{n}{3}=\frac{n(n-1)(n-2)}{6}$ is also even number. So the number of vertices with the color $r$ is equal to the number of vertices with the color $g$. If we only keep the vertices with the color $r$, we will have subgraph which the vertices doesn't adjacent each other. So we will have the independent number of $3-$token graph $\Gamma_3(P_n\}$ is $\frac{\left| V(\Gamma_3(P_n)) \right|}{2}$ . For $n$ odd number, the $|V(\Gamma(P_n))|= \binom{n}{3}=\frac{n(n-1)(n-2)}{6}$ doesn't mean that the $|V(\Gamma(P_n))|$ equal to the odd number. 
    %Belom dilanjutin bingung soalnyaa
%\end{proof}
%\end{comment}
Observe the following independent edge set in the graph $\Gamma_3(P_n)$ for every $n\geq 4$:
\begin{align*} \small
    \left\{(i,j,k)(i,j,k+1): k=2t-1, 1 \leq i \leq n-2-k, 1 \leq j \leq i, 1 \leq t \leq \frac{n-2}{2} \right\} \cup \\
    \left\{(i,j,k)(i,j+1,k): i=2t, j=2s-1, k=n-1-2t, 1 \leq t \leq \frac{n-2}{2}, 1 \leq s \leq t \right \}
\end{align*}
for even $n$, and
\begin{align*} \small
    \left\{(i,j,k)(i,j,k+1): k=2t-1, 1 \leq i \leq n-2-k, 1 \leq j \leq i, 1 \leq t \leq \frac{n-3}{2}\right\} \cup \\
    \left\{(i,j,k)(i,j+1,k): i=2t, j=2s-1, k=n-2-2t, 1 \leq t \leq \frac{n-3}{2}, 1 \leq s \leq t\right\}
\end{align*}
for odd $n$. We hypothesize that these sets represent the largest independent edge sets that can be constructed. Based on this hypothesize, the following conjecture is proposed.
\begin{conjecture}
    For any $n\geq 3$, the $3$-token graph $\Gamma_3(P_n)$ satisfies the following.
    \begin{align*}
    \alpha'(\Gamma_3(P_n)) &= 
    \begin{cases}
        \sum_{k=1}^{\frac{n}{2}-1} \left(\sum_{i=1}^{2k-1} i \right)+ \sum_{i=1}^{\frac{n}{2}-1} i, & \text{ if } n \text{ is even},\\
            \sum_{k=1}^{\frac{n-1}{2}-1} \left(\sum_{i=1}^{2k} i \right)+ \sum_{i=1}^{\frac{n-1}{2}-1} i, & \text{ if } n \text{ is odd}
    \end{cases}\\
    &=\begin{cases}
            \frac{n^3-3n^2+2n}{12}, & \text{ if } n \text{ is even},\\
            \frac{n^3-3n^2-n+3}{12}, & \text{ if } n \text{ is odd}.
        \end{cases}
    \end{align*}
\end{conjecture}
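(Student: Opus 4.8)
The plan is to bypass the explicit edge families altogether and recover $\alpha'$ from the already known value of $\alpha$, using the bipartiteness of $CS_n$ together with two classical min--max identities. First I would record that, by Theorem~\ref{chiCS_n}, the graph $CS_n$ is $2$-colorable ($CS_3\cong K_1$, and $\chi(CS_n)=2$ for $n\ge 4$), hence bipartite, with colour classes $B=\{(i,j,k):i+j+k\text{ even}\}$ and $C=\{(i,j,k):i+j+k\text{ odd}\}$ exactly as in the proof of Theorem~\ref{teo_alpha_CSn}. By the elementary identity $\alpha(G)+\tau(G)=|V(G)|$ (attributed to Gallai), where $\tau$ denotes the vertex-cover number, together with K\"onig's theorem (applicable since $CS_n$ is bipartite), namely $\tau(CS_n)=\alpha'(CS_n)$, one obtains $\alpha'(CS_n)=|V(CS_n)|-\alpha(CS_n)$; and since $\Gamma_3(P_n)\cong CS_n$ by Theorem~\ref{isomorfisGamma3Pn}, the same identity holds for $\Gamma_3(P_n)$.

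Next I would substitute the known quantities: $|V(CS_n)|=\binom{n}{3}=\frac{n^3-3n^2+2n}{6}$ (computed in the proof of Theorem~\ref{isomorfisGamma3Pn}) and $\alpha(CS_n)$ as given by Theorem~\ref{teo_alpha_CSn}. For even $n$ this yields $\alpha'(\Gamma_3(P_n))=\frac{n^3-3n^2+2n}{6}-\frac{n^3-3n^2+2n}{12}=\frac{n^3-3n^2+2n}{12}$, so that $\Gamma_3(P_n)$ in fact has a perfect matching; for odd $n$ it yields $\alpha'(\Gamma_3(P_n))=\frac{n^3-3n^2+2n}{6}-\frac{n^3-3n^2+5n-3}{12}=\frac{n^3-3n^2-n+3}{12}$, which also covers the degenerate case $n=3$. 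This establishes the closed forms. To finish the statement I would verify that the two summation expressions equal these closed forms; this is a routine computation, obtained by splitting each outer sum according to the stated parity pattern and applying $\sum_{i=1}^{m}i=\frac{m(m+1)}{2}$ and $\sum_{i=1}^{m}i^2=\frac{m(m+1)(2m+1)}{6}$.

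If one prefers an argument not relying on K\"onig's theorem, the burden shifts to proving the lower bound $\alpha'(\Gamma_3(P_n))\ge(\text{claimed value})$ directly: one shows that the two explicit edge families displayed just before the conjecture are pairwise vertex-disjoint, hence form a matching, and that their total cardinality equals the displayed sum. The main obstacle on that route is combinatorial bookkeeping: checking that the ``vertical'' edges $(i,j,k)(i,j+1,k)$ occupy precisely the top layer of each even slice $i=2t$ (the layer $k=n-1-i$ for even $n$, and $k=n-2-i$ for odd $n$), which is exactly the set of vertices of that slice left unsaturated by the ``horizontal'' edges $(i,j,k)(i,j,k+1)$, and that no vertex is used twice at the interface of the two families or at the boundary of $CS_n$. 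With K\"onig's theorem available, however, this construction is needed only as an explicit witness, since the upper bound $|V(CS_n)|-\alpha(CS_n)$ is then automatically attained; so the argument essentially reduces to the elementary computation of the first two paragraphs, and the conjecture becomes provable as stated.
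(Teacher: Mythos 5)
Your argument is correct, and it goes well beyond what the paper does: the paper offers no proof of this statement at all---it is stated only as a conjecture, supported by the two explicit edge families displayed just before it, which at best witness a lower bound. Your route (bipartiteness of $CS_n$ from Theorem \ref{chiCS_n}, Gallai's identity $\alpha+\tau=|V|$, K\"onig's theorem $\tau=\alpha'$, then substitution of $|V(\Gamma_3(P_n))|=\binom{n}{3}$ and Theorem \ref{teo_alpha_CSn}) yields exactly the claimed closed forms, including the degenerate case $n=3$, and the reduction of the double sums to those closed forms is indeed routine; so, modulo the results you cite, you have upgraded the conjecture to a theorem rather than reproved it. The one caveat to flag is that your chain is only as strong as Theorem \ref{teo_alpha_CSn}: the paper's proof of that theorem only shows the parity classes are \emph{maximal} independent sets (``it is clear that $A$ is the only independent set with cardinality\dots'' is asserted, not proved), so strictly speaking maximumness is not established there. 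You can close this gap within your own framework: the inequality $\alpha'\leq |V|-|A|$ holds for \emph{any} independent set $A$ (each matching edge misses $A$ in at least one endpoint), so the parity class gives the upper bound on $\alpha'$ without K\"onig and without Theorem \ref{teo_alpha_CSn}; pairing this with the paper's explicit matching (your ``bookkeeping'' route) of the matching cardinality simultaneously certifies both $\alpha'$ and $\alpha$, since a matching $M$ and independent set $A$ with $|M|+|A|=|V|$ are each optimal. Either way, your approach buys a genuine proof where the paper has only a construction and a hypothesis.
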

% \begin{proof}
%     \textcolor{red}{Beri Bukti}
% \end{proof}

% \begin{theorem}[Djuang \textcolor{red}{Belum dapat generalisasinya}]
%     Let $P_n$, $n \geq 4$ be path graph. Then
%     \begin{align*}
%         W(\Gamma_3(P_n))=.
%     \end{align*}
% \end{theorem}
% \begin{proof}
%     For any $n \geq 4$, using Theorem \ref{isomorfisGamma3Pn}, we have $\Gamma_3(P_n) \cong CS_n$. So, $W(\Gamma_3(P_n))=W(CS_n)$. Now, consider that
%     \begin{align*}
%         W(CS_n)= \sum_{\{u,v\} \subseteq V(CS_n)} d_{CS_n}(u,v).
%     \end{align*}
%     Based on Lemma \ref{lemmadCSn}, we get
%     \begin{align*}
%         W(CS_n)&= \sum_{\{(i_1,i_2,i_3),(j_1,j_2,j_3)\} \subseteq V(CS_n)} |i_1-j_1|+|i_2-j_2|+|i_3-j_3|\\
%         &=k_1(1)+k_2(2)+\cdots+3(n-3).
%     \end{align*}
% \end{proof}
% \section{Conclusion}

\begin{openproblem} For further research, it would be interesting to investigate the structure of the 3-token graph of cycle graphs and other types of graphs. Additionally, the k-token graph of path graphs presents further open problems that could be explored.
\end{openproblem}

% \section{Tables and figures}

% Table \ref{tab-01} shows how to show some data using the \verb table environment.

% \begin{table}
% \def~{\phantom0}
% \catcode`\@=13
% \def@{\phantom.}
% \caption{Some caption text.\label{tab-01}}
% \medskip
% \begin{center}
% \begin{tabular}{l|ccc}\hline
% \multicolumn1l{\it Some title}\\\hline\hline
% row 1, column 1         &   row 1, column 2  \\
% row 2, column 1    &   row 2, column 2   \\
% row 3, column 1    &   row 3, column 2   \\
% \hline
% \multicolumn1l{\it Another title} & Value 1 &   Value 2 &   Value 3 \\
% \hline\hline
% row 1                    & ~130 & 30 & 30 \\
% row 2                    & 1025 & ~1 & 15 \\
% row 3                   & ~100 & ~1 & 10 \\
% row 4        & 2925 & ~1 & ~4 \\
% row 5            & 2950 & ~1 & ~2 \\\hline
% \end{tabular}
% \end{center}
% \end{table}

% Figure \ref{fig_abc} shows how to use the \verb figure environment for displaying graphics, etc.

% \begin{figure}
% \caption{The caption  describing  the figure comes here. \label{fig_abc}}
% %\vskip3cm
% \includegraphics[width=0.75\textwidth]{xsinx-49.png}
% \end{figure}

\section*{Acknowledgements}

This research work was supported by the 2025 FMIPA UGM Faculty Research Grant. The authors gratefully acknowledge the reviewers for their valuable feedback and constructive suggestions.

%\bibitem{Ashraf2017}
%Ashraf, F., Bača, M., Lascsáková M, Z., and Semaničová-Feňovčíková. (2017). On  H$-$irregularity strengths of graphs. \textit{Discussiones Mathematicae: Graph Theory} \textbf{37} pp 1067-1078

%\bibitem{suni2020}
%Suni D., Dafik., Tirta .,Kristiana., and Nisviasari R. (2020). On Total $H-$irregularity strength of diamond  ladder, three circular ladder, and prism graphs. \textit{Journal of Physics: Conf. Series} \textbf{1465}


\begin{thebibliography}{99}
\bibitem{Deepalakshmi2020}
Deepalakshmi, J., Marimuthu, G., Somasundaram, A., \& Arumugam, S. (2020). On the 2-token graph of a graph. \emph{AKCE International Journal of Graphs and Combinatorics}, 17(1), 265–268. \href{https://doi.org/10.1016/j.akcej.2019.05.002}{https://doi.org/10.1016/j.akcej.2019.05.002}. 
\bibitem{FabilaMonroy2012}
Fabila-Monroy, R., Flores-Peñaloza, D., Huemer, C. et al, (2012). Token Graphs. \emph{Graphs and Combinatorics} 28, 365–380. \href{https://doi.org/10.1007/s00373-011-1055-9}{https://doi.org/10.1007/s00373-011-1055-9}.

\bibitem{Susanti2023}
Susanti, Y. (2023). Onthe2-Token Graphs of Some Disjoint Union of Graphs. \emph{Malaysian Journal of Mathematical Sciences} 17(4): 719–730. \href{https://doi.org/10.47836/mjms.17.4.12}{https://doi.org/10.47836/mjms.17.4.12}.
\bibitem{wilson}
Wilson, R. J. (2010). \emph{Introduction to Graph Theory, Fifth Edition}. Pearson Education Limited.
\bibitem{Zhangju2023}
Zhang, J., Zhou, J. X., Li, Y. T., \& Kwon, Y. S. (2023). The automorphisms of 2-token graphs. \emph{Applied Mathematics and Computation}, 446, 127872. \href{https://doi.org/10.1007/s00373-011-1055-9}{https://doi.org/10.1007/s00373-011-1055-9}.

\bibitem{Zhangju2022}
Zhang, J., \& Zhou, J. X. (2022). \emph{Edge-transitive token graphs. Discrete Mathematics}, 345(11), 113043.
%\bibitem{wilson1996}
%Wilson, R. J., 1996, \emph{Introduction to Graph Theory}. Fourth Edition, Addison Wesley Longman Limited, England.

%\bibitem{xxx2000}
%Nama Penulis, Tahun, \emph{Judul Buku}, Penerbit.

\end{thebibliography}
\end{document}